\title{Omega-categorical limits of betweenness relations and $D$-sets}
\author{ Asma Ibrahim Almazaydeh$^{1}$, Samuel Braunfeld$^{2}$, and Dugald Macpherson$^{3}$\\
$^1$ Department of Mathematics, Tafila Technical University, Tafila, Jordan\\
$^{2}$ Computer Science Institute, Charles University, 11800 Prague, Czech Republic\\ Computer Science Institute, Czech Academy of Sciences, 18200 Prague, Czech Republic\\
$^3$ Department of Mathematics,  University of Leeds, Leeds, United Kingdom}
\newtheorem{mydef}{Definition}[section]
\newtheorem{thm}[mydef]{Theorem}
\newtheorem{lem}[mydef]{Lemma}
\newtheorem{prop}[mydef]{Proposition}
\newtheorem{ex}[mydef]{Example}
\def\Aut{\mathop{\rm Aut}\nolimits}
\def\acl{\mathop{\rm acl}\nolimits}
\def\Sym{\mathop{\rm Sym}\nolimits}
\def\lev{\mathop{\rm lev}\nolimits}
\def\Min{\mathop{\rm Min}\nolimits}
\def\Max{\mathop{\rm Max}\nolimits}
\def\eq{\mathop{\rm eq}\nolimits}
\def\ram{\mathop{\rm ram}\nolimits}
\def\tp{\mathop{\rm tp}\nolimits}
\begin{document}

\maketitle
\pagenumbering{arabic}

\maketitle
\begin{abstract}
We explore  two constructions of oligomorphic Jordan permutation groups preserving a `limit of betweenness relations' and a `limit of $D$-relations', from \cite{bhattmacph2006jordan} and \cite{almazaydeh2021jordan} respectively. Several issues left open in \cite{almazaydeh2021jordan} are resolved. In particular it is shown that the `limit of $D$-relations' is not homogeneous in the given language, but is `homogenizable', that is, there is a homogeneous structure over a finite relational language with the same universe and the same automorphism group. The structure is NIP, but not monadically NIP, its age is not well-quasi-ordered under embeddability, and the growth rate of the sequence enumerating orbits on $k$-sets grows faster than exponentially. The automorphism group is maximal-closed in the symmetric group. Similar results are shown for the construction in  \cite{bhattmacph2006jordan}.

\end{abstract}
\section{Introduction}
If $G$ is a transitive permutation group on a set $\Omega$, then a subset $\Gamma$ of $\Omega$ is a {\em Jordan set} if $|\Gamma|>1$ and the pointwise stabiliser $G_{(\Omega\setminus \Gamma)}$ is transitive on $\Gamma$. It is a {\em proper} Jordan set if in addition, if $G$ is $(k+1)$-transitive then $|\Omega\setminus \Gamma|\neq k$, that is, the Jordan condition does not arise just from the degree of transitivity. A {\em Jordan group} is a transitive permutation group with a proper Jordan set. Finite Jordan groups which are {\em primitive} (i.e. preserve no proper non-trivial equivalence relation on the set acted on) were classified in  \cite{neumann1985some} -- see also \cite{kantor1985homogeneous} and the Appendix of \cite{cherlin1985aleph}. A structure theorem for {\em infinite} primitive Jordan groups was given in \cite{adeleke1996classification}, building on earlier work of Adeleke and Neumann in \cite{adeleke1996primitive}. This structure theorem states that  if  $(G,\Omega)$ is a primitive  Jordan group which is not {\em highly transitive} (i.e. for some $k\geq 1$ is not $k$-transitive), then $G$  preserves on $\Omega$  a structure of one of the following types: a Steiner system (possibly with infinite `lines'); a linear order, circular order, linear betweenness relation or separation relation; a semilinear order, `general' betweenness relation, $C$-relation or $D$-relation; or a `limit' of Steiner systems, betweenness relations or $D$-relations. The three `limit' constructions are rather different in that no explicit class of invariant relational structure is specified. 

A construction of a group preserving a limit of Steiner systems was given by Adeleke in \cite{adeleke1995semilinear}, and developed further by Johnson in \cite{johnson2002constructions}. Adeleke also in \cite{adeleke2013irregular} gave examples of groups preserving a limit of betweenness relations and a limit of $D$-relations. His constructions are not oligomorphic, at least in the case of a limit of betweenness relations, and no invariant relational structure is explicit. The paper \cite{bhattmacph2006jordan}, which builds on early drafts of \cite{adeleke2013irregular}, constructs an $\omega$-categorical structure $\mathcal{M}_B$ whose automorphism group preserves a limit of betweenness relations, and the same is done (with an analogous structure $\mathcal{M}_D$) for limits of $D$-relations in \cite{almazaydeh2021jordan}. These constructions are intricate, but we believe them to be essentially new treelike constructions which may have significance for other reasons. A number of fine structural issues were left open in \cite{bhattmacph2006jordan}
and \cite{almazaydeh2021jordan}, and we aim here to resolve some of these. We note also the paper \cite{bwt}, which takes a rather more general approach to the construction of limits of betweenness relations of
\cite{bhattmacph2006jordan}, extending beyond the $\omega$-categorical context.

A countably infinite relational structure $M$ is said to be {\em homogeneous} if every isomorphism between finite substructures of $M$ extends to an automorphism. Following Covington \cite{covington1990homogenizable}, we say that $M$ is {\em homogenizable} if it can be made homogeneous by adapting the language, that is, if there is a homogeneous structure $M'$ over a finite relational language such that $M$ and $M'$ have the same universe and the same automorphism group.

{\bf Notation.} Generally, we prefer not to distinguish notationally between a structure and its universe, but for $\mathcal{M}_D$ and $\mathcal{M}_B$ this seems necessary, largely because for $\mathcal{M}_D$ we consider the structure under three different languages, all with the same automorphism group, and similarly for $\mathcal{M}_B$. Our convention is as follows for $\mathcal{M}_D$, and is essentially the same for $\mathcal{M}_B$. When the choice of language is unimportant, and the focus is just on 0-definable sets and orbits, we just write $\mathcal{M}_D$. The structure constructed in \cite{almazaydeh2021jordan}, in a language with relation symbols $L, S, L', S', R,Q$ will be denoted  by $\mathcal{M}_D^1$. Its reduct to the language with just the ternary relation symbol $L$ will be denoted $\mathcal{M}_D^0$. In the language in which it is homogeneous, which has relation symbols $L,S, Q^{\leq}, Q^{\geq}, P,T$, it will be denoted $\mathcal{M}_D^2$. The universe will just be denoted $M_D$. We may write $\mathcal{M}$ for one of these structures, but except with these structures, we do not distinguish notationally between a structure and its domain.

We prove here the following result, collecting several elementary observations about $\mathcal{M}_B$ and $\mathcal{M}_D$. We say that a structure $M$ has {\em trivial algebraic closure} if $\acl(A)=A$ for all $A\subset M$.

\begin{prop} \label{homogenizable} The following hold, for $\mathcal{M}\in \{\mathcal{M}_B^1, \mathcal{M}_D^1\}$.
\begin{enumerate}
    \item $\mathcal{M}$ is not homogeneous, but is homogenizable.
    \item $\mathcal{M}$ is NIP.
    \item $\mathcal{M}$ has trivial algebraic closure.
    \end{enumerate}
  \end{prop}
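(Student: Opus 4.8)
The three assertions are essentially independent, so the plan is to treat them in turn, concentrating on $\mathcal{M}_D^1$, with the argument for $\mathcal{M}_B^1$ being parallel (and somewhat simpler). For the non-homogeneity half of part (1), I would exhibit a single pair of finite substructures of $\mathcal{M}_D^1$ that are isomorphic in the language $\{L,S,L',S',R,Q\}$ yet lie in distinct orbits of $\Aut(\mathcal{M}_D^1)$; equivalently, I would pin down an $\Aut$-orbit on tuples of small length whose defining condition refers to the relative position of points at different levels of the treelike construction and is therefore invisible to the quantifier-free type in the six given relations. This is the expected obstruction: the given relations record local betweenness/$D$-data, while the orbit also depends on a coarser comparison that only additional relations make explicit. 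For homogenizability I would then introduce the relations $Q^{\leq}, Q^{\geq}, P, T$, verify that each is $0$-definable in $\mathcal{M}_D^1$ (so that the expansion $\mathcal{M}_D^2$ has the same automorphism group), and prove directly that $\mathcal{M}_D^2$ is homogeneous by a back-and-forth argument: given an isomorphism between finite substructures in the language of $\mathcal{M}_D^2$, I would extend it one point at a time, using the Jordan and transitivity properties of the construction to realize the required type. The substance is that the quantifier-free type in the enlarged language now determines the $\Aut$-orbit, which is precisely what the new relations were chosen to guarantee.

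For part (2), my plan is to present $\mathcal{M}_D^1$ as interpretable in a treelike structure known to be NIP: a semilinear (meet-tree) order carrying finitely many linear orders that encode the local $D$-data of the construction. Since semilinear orders and linear orders are each NIP, and NIP is preserved under interpretation and under passage to reducts, this yields NIP for $\mathcal{M}_D^1$. The technical content is to write the interpretation down explicitly and to confirm that the target genuinely lies in a known NIP class; should a clean interpretation in a single tree-with-orders prove awkward because of the limiting feature of the construction, the fallback is a direct formula-by-formula analysis, reducing each $\phi(\bar x;\bar y)$ to the treelike coordinates and bounding alternation there.

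For part (3), I would argue trivial algebraic closure via Jordan sets. Given a finite $A\subset M_D$ and a point $b\notin A$, I would locate a Jordan set $\Gamma$ (a cone of the construction) with $b\in\Gamma$ and $A\subseteq M_D\setminus\Gamma$, which is possible because every point lies in a cone disjoint from any prescribed finite set of other points. As the construction is a Jordan group, $\Gamma$ is infinite and $\Aut(\mathcal{M}_D)_{(M_D\setminus\Gamma)}$ is transitive on it, so $b$ has infinitely many conjugates fixing $A$ pointwise and hence $b\notin\acl(A)$. Equivalently, once part (1) is in hand, one may observe that the age of the homogeneous structure $\mathcal{M}_D^2$ has no algebraicity, so every one-point extension of a finite configuration admits infinitely many realizations.

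The main obstacle, in my estimation, is the homogeneity of $\mathcal{M}_D^2$ in part (1): this demands a careful extension/amalgamation argument over the intricate treelike construction, and it is exactly here that one must check that $Q^{\leq}, Q^{\geq}, P, T$ capture all the missing orbit information and no less. By contrast, non-homogeneity requires only one witnessing example, NIP follows formally once the interpretation is set up, and trivial algebraic closure is immediate from the Jordan structure.
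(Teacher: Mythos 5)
Your parts (1) and (3) are essentially the paper's own route: the non-homogeneity is indeed witnessed by two isomorphic finite configurations whose $S$-relations are witnessed in the same versus different $D$-sets, the homogenizing relations are exactly $P,Q^{\leq},Q^{\geq},T$ recording level-comparisons, and trivial algebraic closure follows from the Jordan property of pre-nodes/pre-branches exactly as you say (the paper picks the largest $j\in J$ separating $A\cup\{a\}$ into distinct $\sim_j$-classes and uses that the pre-node $[a]_j$ is an infinite Jordan set avoiding $A$). One caveat on (1): the paper does not run a pointwise back-and-forth ``using Jordan and transitivity properties'' --- that phrasing begs the question of why quantifier-free $\mathcal{L}_D^2$-types determine orbits. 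Instead it extends both finite substructures $A_1,A_2$ to finite substructures $A_1',A_2'$ lying in the amalgamation class $\mathcal{D}$ (adding directions in special branches, level by level from the lowest witnessed $D$-set upward), extends $f$ to $A_1'\to A_2'$, and then invokes the Fra\"iss\'e extension property that isomorphisms between substructures in $\mathcal{D}$ extend to automorphisms. Your sketch would need to be reorganized along these lines to close.

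The genuine gap is part (2). Your primary plan --- interpret $\mathcal{M}_D^1$ in a meet-tree equipped with finitely many linear orders and quote preservation of NIP under interpretation --- has no support: the whole point of the ``limit of $D$-relations'' construction is that $\Aut(\mathcal{M}_D)$ preserves no basic treelike structure on its domain, and no interpretation of $\mathcal{M}_D$ in a known NIP structure is available (exhibiting one would be a significant theorem in itself, and the iterative limiting feature you acknowledge is precisely what blocks it). So the ``formula-by-formula'' fallback is not a fallback; it is the entire content, and as stated it is vacuous. What the paper actually does is: (a) work in the homogenized structure $\mathcal{M}_D^2$ so that quantifier elimination holds --- note this makes part (2) depend on part (1), contrary to your claim that the assertions are independent; (b) prove a classification of all indiscernible sequences of singletons into three shapes ($D$-set indiscernibles, upwards/downwards $S$-free indiscernibles, and mixed indiscernibles), which is the real work (Lemma \ref{ind-D}, with a long case analysis); and (c) check by inspection that no atomic $\mathcal{L}_D^2$-formula with a single free object variable can define an alternating subset of any such sequence, which suffices since Boolean combinations of NIP formulas are NIP and NIP can be tested on formulas $\phi(x;\bar y)$ with $x$ a singleton. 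Without the indiscernible-sequence classification (or some concrete substitute), your proposal does not prove NIP.
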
 

We view the family of structures explored in  \cite{adeleke1998relations} -- namely semilinear orders, betweenness relations, and $C$ and $D$ relations -- as treelike structures. They are all built from (lower) semilinear orders: betweenness relations are obtained as reducts of semilinear order by replacing the ordering by an induced betweenness, $C$-relations have as universe a dense set of maximal chains of a semilinear order, and a $D$-relation can be obtained from a $C$-relation by forgetting the downwards direction (and can be viewed as a dense set of ends in a betweenness relation). In this paper we shall refer to such structures as the {\em basic} treelike structures. The main theorem of \cite{adeleke1996classification} says that any  primitive but not highly transitive Jordan group $(G,\Omega)$ with a proper {\em primitive} Jordan set preserves on $\Omega$ either a linear order, linear betweenness relation, circular order, or separation relation, or a basic treelike structure.
There is a uniformity in the method of proof -- essentially, in each case, the relational structure is identified by finding a group-invariant family of subsets of $\Omega$ with certain specified intersection properties.

The structures $\mathcal{M}_B$ and $\mathcal{M}_D$ are entitled to be called treelike, even though their automorphism groups do not preserve any basic treelike structure on the domain. The universe is the union of an invariant family of subsets which is ordered (under inclusion) by an invariant semilinear order; each of the sets in this family has a unique maximal invariant equivalence relation, with an invariant  betweenness relation or $D$-relation (for $\mathcal{M}_B$ or $\mathcal{M}_D$ respectively) on the quotient; and for both $\mathcal{M}=\mathcal{M}_B$ and $\mathcal{M}=\mathcal{M}_D$, if $a\in M$ then there is an $a$-definable $C$-relation on $M\setminus \{a\}$. 

There are several strong model-theoretic and combinatorial conditions which, among finitely homogeneous and more generally $\omega$-categorical structures, appear to coincide, and we now describe these. If $G$ is an oligomorphic group on $\Omega$, let $f_k(G)$ denote the number of orbits of $G$ on the collection $\Omega^{[k]}$ of unordered $k$-subsets of $\Omega$. It was shown in \cite{macpherson1985orbits} that if $(G,\Omega)$ is primitive but not highly homogeneous then $(f_k(G))$ grows at least exponentially, and more detailed results on growth rates, answering several questions from \cite{macpherson1985orbits} and \cite{macphersongrowth2}, are obtained in \cite{braun-growth}. We shall say that $(f_k(G))$ has {\em super-exponential growth} if, for every $c>1$, there is $K\in \mathbb{N}$ such that $f_k(G)>c^k$ for every $k>K$. By \cite{macphersongrowth3},  if $M$ is  $\omega$-categorical and has the independence property, then $f_k(\Aut(M))$ has super-exponential growth; in fact, for any $\epsilon>0$, $f_k(\Aut(M))>2^{k^{1+\epsilon}}$ for  sufficiently large $k$. It is shown in \cite{cameron1987some}, extending earlier results, that there are basic treelike structures of all four kinds with non-super-exponential growth -- in fact, there are examples of interesting homogeneous {\em expansions} of 
such structures with non-super-exponential growth, such as a $C$-relation equipped with a compatible linear order or a $D$-relation with a compatible circular order.

Recall that  the {\em age} Age$(M)$ of a relational structure $M$ is the collection of finite structures which embed in $M$. It is well-known that for certain rather rare homogeneous structures $M$, Age$(M)$ is well quasi-ordered under embeddability -- that is, there are no infinite antichains (and, as is automatic, no infinite descending chains). We shall say that such structures have {\em wqo age}. Using Kruskal's Theorem, one can rather easily show that natural examples of basic treelike structures have wqo age.

{\em Monadically NIP} structures, for which any expansion by unary predicates remains NIP, were introduced by Baldwin and Shelah in \cite{bshelah}, and their study continued in \cite{bl}. There, extending \cite[Conjecture 2.9]{homogsurvey}, Conjecture 1 states that for an $\omega$-categorical homogeneous relational structure $M$, the following conditions are equivalent: 
$M$ is monadically NIP; $(f_k(\Aut(M))$ is bounded above exponentially; and $M$ has wqo age. In this direction, the authors prove the following (\cite[Theorem 1.2]{bl}). Here, the growth rate of Age$(M)$ refer to the growth of the sequence counting isomorphism types of $k$-element structures in the age.
\begin{thm}[{\cite[Theorem 1.2]{bl}}] 
Let $M$ be an $\omega$-categorical structure that is not monadically NIP. Then $(f_k(Aut(M)))$ is asymptotically greater than $\lfloor k/\ell\rfloor!$ for some $\ell\in \mathbb{N}^{>0}$.
\end{thm}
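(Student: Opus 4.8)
The plan is to separate two cases according to whether $M$ is NIP, and to extract from the failure of monadic NIP a definable coding rich enough to force factorially many orbits.

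First I would dispose of the case in which $M$ is \emph{not} NIP. Since monadic NIP implies NIP, this case is consistent with the hypothesis, and here the conclusion is immediate from the result of \cite{macphersongrowth3} quoted above: for $\omega$-categorical $M$ with the independence property and any $\epsilon>0$ one has $f_k(\Aut(M))>2^{k^{1+\epsilon}}$ for large $k$. Since $\log_2(k!)\sim k\log_2 k$ while $\log_2\!\big(2^{k^{1+\epsilon}}\big)=k^{1+\epsilon}$ dominates $k\log_2 k$, we get $f_k(\Aut(M))>k!=\lfloor k/1\rfloor!$ eventually, so the statement holds with $\ell=1$. Thus I may assume that $M$ is NIP but not monadically NIP.

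Second, I would invoke the combinatorial characterization of monadic NIP (Baldwin--Shelah \cite{bshelah}, and the refined ``coding'' dichotomy of \cite{bl}). The failure of monadic NIP for an NIP structure should yield a coding configuration: a formula $\phi(\bar x;\bar y)$ with $|\bar x|=\ell$, a parameter tuple $\bar c$, and arbitrarily large finite sets of $\ell$-tuples over which $\phi$ realizes, relative to $\bar c$, all patterns from a class whose number of isomorphism types grows at least factorially. The cleanest target is to recover definable copies of arbitrarily large finite pieces of the generic structure carrying two linear orders --- the generic permutation --- since two orders already defeat monadic NIP while remaining NIP, and an $n$-element piece realizes any one of the $n!$ permutation patterns. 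Because the existence of such a piece realizing a prescribed pattern is an existential statement true in the monster model, $\omega$-categoricity (Ryll-Nardzewski) lets me realize these patterns inside the countable $M$ itself on finite sets of $\ell$-tuples, so that distinct patterns correspond to distinct $\Aut(M)$-orbits.

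Third, I would count. Folding the finitely many coordinates of $\bar c$ into each configuration and representing each of the $n$ coded elements by its $\ell$-tuple, an $n$-element piece occupies at most $\ell n+|\bar c|$ points of $M$, and distinct permutation patterns give non-conjugate point-sets, since any automorphism witnessing conjugacy must fix the included parameters $\bar c$, under which the patterns were separated by construction. Hence, along $k=\ell n+|\bar c|$, $f_k(\Aut(M))\ge n!$; interpolating to all large $k$ via monotonicity of the profile of an infinite structure with no finite orbit (Pouzet, Cameron) and a harmless enlargement of $\ell$ to absorb the additive constant, I obtain $f_k(\Aut(M))\ge \lfloor k/\ell\rfloor!$ for all sufficiently large $k$.

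The hard part will be the extraction step: turning the model-theoretic failure of monadic NIP into a genuinely definable, finitary coding of a factorially large class, and in particular guaranteeing that the coded configurations do not collapse under $\Aut(M)$. The delicate points are controlling the arity $\ell$ of the coding tuples uniformly, handling the parameters $\bar c$ so that conjugacy is really broken (rather than merely equality of types over $\emptyset$), and ensuring that the extracted pattern class has \emph{at least factorial} growth rather than merely exponential growth --- which is precisely why one aims for an order/permutation coding rather than a plain independence-property coding of subsets.
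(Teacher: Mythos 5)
This statement is not proved in the paper at all: it is quoted as a black box from \cite[Theorem 1.2]{bl} (Braunfeld--Laskowski), so there is no internal proof to compare against, and your proposal has to stand on its own as a proof. It does not, because its central step is circular. Your Case 1 (if $M$ has IP, then by \cite{macphersongrowth3} $f_k(\Aut(M))>2^{k^{1+\epsilon}}$, which eventually dominates $k!$ since $k^{1+\epsilon}$ dominates $k\log_2 k$) is fine, and the final interpolation via monotonicity of the orbit-counting sequence is routine. But the entire content of the theorem lies in your second step, where you say the failure of monadic NIP ``should yield a coding configuration'' permitting a definable, finite-arity, parameter-definable encoding of arbitrarily large finite permutations. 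That extraction is not available off the shelf from Baldwin--Shelah, whose characterizations do not directly hand you a uniform factorial-strength coding surviving in the countable model; producing it is precisely what \cite{bl} proves, and what makes their theorem nontrivial. Assuming it is assuming the theorem. You candidly flag this as ``the hard part,'' but a proof proposal that defers exactly the hard part has not proved anything beyond a reduction of the statement to itself.

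There is also a genuine flaw in your counting step, independent of the extraction. You claim distinct permutation patterns give distinct $\Aut(M)$-orbits on sets because ``any automorphism witnessing conjugacy must fix the included parameters $\bar c$.'' That does not follow: an automorphism carrying one coded $(\ell n+|\bar c|)$-point set onto another is only required to map the set onto the set; nothing forces it to send $\bar c$ to $\bar c$, pointwise or even setwise. To get distinct orbits you need the pattern to be recoverable from the unordered set alone, invariantly. Compare how this paper handles the identical issue in the proof of Proposition~\ref{levelC}: there the three reference points are existentially definable \emph{within} the encoding set (via the formula asserting the existence of $y,z$ with $x\wedge y=x\wedge z=y\wedge z$), and the equivalence relation $E$ and the two orders $<_1$, $<_2$ are then defined from any such point, so that the permutation is determined by the isomorphism type of the set itself. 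Without an argument of this kind, your coded configurations could in principle collapse into few orbits, and the factorial lower bound evaporates.
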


Originally we had hoped that limits of betweenness relations and $D$-sets might have provided a counterexample to \cite[Conjecture 1]{bl}, but the result below shows they do not. 
\begin{thm} \label{growth} Let $\mathcal{M}\in \{\mathcal{M}_B^2, \mathcal{M}_D^2\}$. Then
\begin{enumerate}
    \item $\mathcal{M}$ is not monadically NIP.
    \item $(f_k(\Aut(\mathcal{M}))$ has super-exponential growth rate.
    \item Age$(\mathcal{M})$ is not wqo.
\end{enumerate}
\end{thm}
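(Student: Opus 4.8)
The plan is to reduce all three parts to the construction of a single \emph{coding configuration} inside $\mathcal{M}$, exploiting the treelike geometry described above. Note first that monadic NIP and the growth sequence $(f_k(\Aut(\mathcal{M})))$ depend only on the automorphism group; since $\mathcal{M}^2$ is $\omega$-categorical and interdefinable with $\mathcal{M}^0$ and $\mathcal{M}^1$, I may pass freely between the languages, and may use that $\mathcal{M}$ is NIP (Proposition \ref{homogenizable}). In particular, because $\mathcal{M}$ is NIP, no quantifier-free formula in the structure itself can have the independence property, so the failure of monadic NIP must genuinely come from an expansion by a unary predicate.

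The combinatorial heart, and the step I expect to be the main obstacle, is to exhibit an infinite ``grid'' inside $\mathcal{M}$. Using the invariant semilinear order on the defining family of subsets together with the betweenness relation (for $\mathcal{M}_B$) or $D$-relation (for $\mathcal{M}_D$) induced on the associated quotients, I would select a doubly-indexed family $(a_{ij})_{i,j<\omega}$ of points such that one index is controlled by the height in the semilinear (tree) direction and the other by position in the betweenness/$D$-direction, and such that a fixed quantifier-free formula $\psi(x,y)$ of $\mathcal{M}^2$ (in the relations $L,S,Q^{\le},Q^{\ge},P,T$ for $\mathcal{M}_D^2$, and analogously for $\mathcal{M}_B^2$) recovers the product order on the grid. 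Verifying that the two directions are genuinely independent, i.e. that the grid is generic enough that every pattern of the relevant relation is realised, is where the detailed definitions of the constructions in \cite{almazaydeh2021jordan} and \cite{bhattmacph2006jordan} must be used; the two cases are parallel, with the $D$-relation argument slightly more delicate because of the missing downward direction.

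Granting the grid, part (1) follows by the standard fact that a grid is not monadically NIP: colouring selected cells by a single unary predicate $U$ and forming $\phi(x,y):=\psi(x,y)\wedge(\exists z)(\cdots U(z)\cdots)$ lets one realise an arbitrary bipartite graph between the rows and the columns, so $\phi$ has the independence property in the expansion $(\mathcal{M},U)$, whence $\mathcal{M}$ is not monadically NIP. Part (2) is then immediate from \cite[Theorem~1.2]{bl}: since $\mathcal{M}$ is $\omega$-categorical and not monadically NIP, $(f_k(\Aut(\mathcal{M})))$ eventually exceeds $\lfloor k/\ell\rfloor!$ for some $\ell$, and factorial growth dominates $c^k$ for every $c>1$, giving super-exponential growth.

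For part (3), I would turn the same configuration into a faithful encoding of finite graphs as finite substructures of $\mathcal{M}$: to a finite graph $H$ on vertices $v_1,\dots,v_n$ associate the substructure $N_H$ consisting of the corresponding rows and columns of the grid together with the points witnessing exactly the edges of $H$, arranged so that the isomorphism type of $N_H$ in the language of $\mathcal{M}^2$ determines $H$ and so that $N_{H}\hookrightarrow N_{H'}$ forces $H\hookrightarrow H'$. Taking $\{H_i\}$ to be an infinite antichain of finite graphs under embeddability (for instance the cycles $C_i$, $i\ge 3$, which form an antichain under induced embeddability) then yields an infinite antichain $\{N_{H_i}\}$ in $\mathrm{Age}(\mathcal{M})$, so the age is not wqo. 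As with part (1), the only real work is the explicit construction and the verification of faithfulness from the defining relations; once the grid is in hand, (1)--(3) are essentially formal.
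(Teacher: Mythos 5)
Your high-level reductions are sound, and one of them is a genuine shortcut the paper does not take: deducing part (2) from part (1) via \cite[Theorem 1.2]{bl} is legitimate, whereas the paper proves (2) directly by a counting argument ($f_{2k+3}\geq k!$ from a coding of permutations). But the proposal has a genuine gap, which you yourself flag: the coding configuration on which all three parts rest is never constructed, and that construction is essentially the entire content of the paper's proof. Two concrete problems. First, the pair of directions you propose --- height in the global semilinear order $(J,\leq)$ against position inside a betweenness/$D$-set --- is the less tractable choice: which $D$-set witnesses a relation and where points sit inside it are entangled through the cone maps, and relations among points of a fixed pre-node are only ever witnessed strictly below its level. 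The paper instead works \emph{inside a single pre-node} $[a]$, where the structure genuinely factors into two coordinates: the chain of equivalence relations $E_i\subset F_i$ ($i<j_0$) gives a level coordinate, and the induced $C$-relation gives a branching coordinate. This is packaged as an explicit homogeneous structure $M_{C,\lev}$ (a universal $C$-relation with a level-comparison relation $V$), shown to be definable over three parameters in $\mathcal{M}$ via \cite[Lemmas 5.4, 5.7]{almazaydeh2021jordan}, and all coding is done there (Proposition~\ref{levelC}). Second, your part (1) needs strictly more than the paper's configuration provides: to realise arbitrary bipartite graphs by a unary colouring you need a full grid --- a cell for \emph{every} row--column pair, recognised by a fixed formula, with no spurious witnesses once the predicate $U$ is added. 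The paper's configuration only codes matchings (a set carrying two linear orders, i.e.\ permutations), which suffices for (2) and (3) but not for bipartite coding; this is presumably why the paper proves (1) by a different device altogether, namely the failure of endless indiscernible triviality (\cite[Theorem 1.2]{braunfeld2024corrigenda}): an infinite permutation yields a sequence $I$ indiscernible over $Ab$ and over $Ac$ but not over $Abc$. A full grid can plausibly be built inside $M_{C,\lev}$, but it requires a genericity argument you have not supplied.

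For part (3), your faithfulness requirement ($N_H\hookrightarrow N_{H'}$ forces $H\hookrightarrow H'$) is exactly the delicate point, and the paper's treatment shows what is needed: embeddings between finite structures preserve existential formulas only in the forward direction, so one must arrange that the ``spine'' of the configuration (the paper's $\{a^1,a^2,a^3\}$) is cut out by an existential formula whose solution set in the target configuration is again the spine, and that the coded orders and their negations are quantifier-free definable over the spine. The paper carries this out for permutations and then quotes the known infinite antichain of permutations under embeddability \cite{vatter2015permutation}; your cycle-coding plan could work the same way, but the verification is precisely the missing work, not a formality. Finally, be careful with ``passing freely between the languages'': this is valid for (1) and (2), which depend only on $\Aut(\mathcal{M})$, but (3) concerns substructures and is language-sensitive --- the paper's closing remark makes exactly this point, and its proof of (3) genuinely uses the homogeneous language $\mathcal{L}_D^2$.
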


Recall that if $M$ is countably infinite, then $\Sym(M)$ carries a topological group structure, where the basic open sets are cosets of pointwise stabilisers of finite sets, and that the topology is metrizable, giving $\Sym(M)$  (or the symmetric group on any countably infinite set) the structure of a Polish group. Its closed subgroups are exactly the automorphism groups of first order structures with universe $M$. If $M$ is a countably infinite structure, we say that $\Aut(M)$ is {\em maximal-closed} in $\Sym(M)$ if  for every closed subgroup $H$ of $\Sym(M)$ with $\Aut(M)\leq H\leq \Sym(M)$ we have $H=\Aut(M)$ or $H=\Sym(M)$. By the Ryll-Nardzewski Theorem, if $M$ is $\omega$-categorical then $\Aut(M)$ is maximal-closed in  $\Sym(M)$ if and only if $M$ has no proper non-trivial {\em reduct} in the sense of \cite{thomas}. We prove the following, solving Problem 6.4 of \cite{almazaydeh2021jordan} .
\begin{thm} \label{max}
    Let $\mathcal{M}\in \{\mathcal{M}_B,\mathcal{M}_D\}$. Then $\Aut(\mathcal{M})$ is maximal-closed in  $\Sym(M)$.
    \end{thm}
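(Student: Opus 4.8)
The plan is to pass to the reduct formulation via the Ryll–Nardzewski theorem and Thomas's correspondence \cite{thomas}: since $\mathcal{M}$ is $\omega$-categorical, $\Aut(\mathcal{M})$ is maximal-closed in $\Sym(M)$ precisely when it has no proper non-trivial reduct, equivalently when every closed group $H$ with $G:=\Aut(\mathcal{M})\lneq H\leq \Sym(M)$ equals $\Sym(M)$. I would fix such an $H$ and aim to show it is highly transitive; as a closed, highly transitive subgroup of $\Sym(M)$ on a countable set has no non-trivial invariant relation and hence equals the full symmetric group, this suffices.

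The engine is the Adeleke–Macpherson structure theorem applied to $H$ rather than to $G$. First I would record that $H$ is again a primitive Jordan group: it is transitive and primitive because it contains the transitive, primitive group $G$ (the invariant equivalence relations of $H$ are among those of $G$), and it is a Jordan group because it inherits every Jordan set of $G$. If $H$ is highly transitive we are done, so assume not. Then $H$ preserves one of the listed structures on $M$: a Steiner system; a linear order, circular order, linear betweenness or separation relation; a basic treelike structure (semilinear order, general betweenness, $C$-relation or $D$-relation); or a limit of Steiner systems, of betweenness relations, or of $D$-relations. Since $G\leq H$, any $H$-invariant structure is also $G$-invariant, so it is enough to show that $G$ preserves none of these except the limit structure already carried by $\mathcal{M}$, and that preserving that limit forces $H=G$.

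To exclude the non-limit cases I would use the invariants of $G$ recorded earlier together with the explicit description of $\mathcal{M}$. Trivial algebraic closure (Proposition \ref{homogenizable}) eliminates the structures in which a bounded number of points determine a finite algebraic block, disposing of the relevant Steiner systems; the orders and the basic treelike relations are ruled out by comparing their invariant families of Jordan sets with the canonical $G$-invariant scaffolding of $\mathcal{M}$ — the family of subsets semilinearly ordered by inclusion, its quotient betweenness/$D$-relation, and, for each point $a$, the $a$-definable $C$-relation on $M\setminus\{a\}$. Concretely, a $G$-invariant linear or circular order (or separation, or linear betweenness) on all of $M$ is incompatible with the branching of this semilinear order and with the $D$-relation on the quotients, and a single $G$-invariant $C$- or $D$-relation on $M$ would contradict the fact that $\mathcal{M}$ realises a genuine \emph{limit} rather than one basic treelike structure; I would make these incompatibilities precise by tracking how the distinguished Jordan sets must sit inside the putative invariant structure.

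The main obstacle is the limit case: ruling out that $H$ preserves a limit of betweenness relations or of $D$-relations different from the one carried by $G$. Here I would argue that the limit structure is rigidly determined by $G$, in the sense that the invariant family of nested subsets, the induced semilinear order, and the quotient relations are all $0$-definable in $\mathcal{M}$ (being unions of $G$-orbits on suitable tuples); any closed $H$ preserving a limit of the same type would then have to preserve this very family and its associated relations, whence $H\leq \Aut(\mathcal{M})=G$, contradicting $G\lneq H$. Establishing this canonicity — that the treelike scaffolding and its quotient $D$- or betweenness relations cannot be re-coordinatised by a strictly larger closed group — is the delicate point, and will draw on the fine structural analysis of $\mathcal{M}_B$ and $\mathcal{M}_D$ from \cite{bhattmacph2006jordan} and \cite{almazaydeh2021jordan}. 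Throughout, $\mathcal{M}_D$ and $\mathcal{M}_B$ are handled by the same scheme, with the quotient $D$-relation replaced by a betweenness relation in the $\mathcal{M}_B$ case.
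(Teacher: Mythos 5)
Your overall strategy---apply the Adeleke--Macpherson classification to the intermediate closed group $H$, which inherits primitivity and Jordan sets from $G$---is exactly the paper's starting point, but there are two genuine gaps in how you handle the resulting case division. The most serious is the case in which $H$ preserves a \emph{limit of Steiner systems} (Definition~\ref{limits-steiner}): your proposal never engages with it. Your paragraph on non-limit cases mentions ``disposing of the relevant Steiner systems'' via trivial algebraic closure, but that argument does not work even for a plain invariant Steiner system, since the structure theorem allows Steiner systems with infinite lines, in which case no point is algebraic over finitely many others and trivial acl yields no contradiction; and it certainly says nothing about a limit of Steiner systems, where the systems live only on the proper subsets $X_j$. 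This case is precisely where the paper does its new work: following \cite{bodirsky-macpherson} it shows (i) no set $X_j$ of the putative Steiner limit can contain a pre-branch (a two-lines-meeting-in-a-point configuration produces a finite set $A$ and a point $a\notin A$ with $G_{(A)}\leq G_a$, contradicting that a pre-branch avoiding $A$ and containing $a$ is a Jordan set), (ii) every infinite subset of $M$ meets infinitely many pairwise disjoint pre-branches, and (iii) these two facts yield a $g\in G$ that violates the coherence condition (iv) of Definition~\ref{limits-steiner}. Without an argument of this kind your proof has a hole at exactly the case that cannot be cited away from earlier work.

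The second gap is your treatment of what you call the main obstacle, namely that $H$ preserves a limit of betweenness relations or of $D$-relations. Your plan---prove that the limit scaffolding is canonically determined by $G$, so that any such $H$ must preserve it and hence equal $G$---is left entirely unproven (you flag it as ``the delicate point''), and it is far more than is needed. The paper dispatches this case in two lines: a group preserving such a limit is by definition not $3$-transitive; but $G$ is $3$-homogeneous and induces $C_2$ on every $3$-set (Propositions~\ref{groupM_B}(5) and \ref{groupM_D}(5)), and $C_2$ is maximal in $\Sym_3$, so a closed $H$ properly containing $G$ that is not $3$-transitive must still induce $C_2$ on each $3$-set, i.e.\ must preserve the relation $L$, forcing $H\leq\Aut(\mathcal{M}^0)=G$, a contradiction. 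The same maximality-of-$C_2$ observation is what lets the paper assume $H$ is $3$-transitive when attacking the Steiner cases, and the remaining non-limit possibilities (separation relation, $D$-relation, Steiner system) are eliminated by citing Lemmas 6.4, 6.6 and 6.5 of \cite{bhattmacph2006jordan}, which your sketched ``incompatibility with the scaffolding'' arguments would otherwise have to reconstruct from scratch. So the two places you identify as routine (Steiner-type structures) and as hard (limits of betweenness/$D$-relations) are in fact, respectively, the hard core of the proof and an easy consequence of Proposition~\ref{groupM_B}(5).
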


We comment briefly on the motivation  of this body of work (the present paper, and also \cite{bhattmacph2006jordan} and \cite{almazaydeh2021jordan}). First, we believe that the class of Jordan permutation groups is a natural and important class. Several applications of structural results on Jordan groups are described in the introduction to \cite{almazaydeh2021jordan}. We do not give detail on these here, but they include: the structure of $\omega$-categorical strictly minimal sets (see the classification given in \cite{cherlin1985aleph}) and analogous results on permutation groups in \cite{neumann1985some} (a description of groups acting primitively on a  countably infinite set $X$ with no countable orbits on the collection of infinite co-infinite subsets of $X$); classification results for primitive groups of uncountable degree containing a  non-identity element of small support (see \cite{adeleke1996infinite}); analogues (see \cite{macpherson-praeger-cycle}) for other cycle types of Wielandt's result that any primitive group of finite degree containing a non-identity finitary permutation is highly transitive; several results showing that certain closed infinite permutation group are maximal-closed in the symmetric group (see e.g. \cite{bodirsky-macpherson} for a non-oligomorphic example of countable degree, and \cite{kaplan2016affine} concerning affine and projective groups).

The further motivation to better understand these limit constructions is that they have potential to provide useful counter-examples, in model theory and permutation group theory. In the work in \cite{adeleke1996classification} giving a structure theorem for primitive Jordan groups, their existence came initially as a surprise, not anticipated in the classification results in \cite{adeleke1996primitive} of primitive Jordan groups with a proper {\em primitive} Jordan set. (We add, though, that Adeleke's initial manuscript giving existence of groups preserving limits of betweenness and $D$-relations, which led to \cite{adeleke2013irregular}, was from  the early 1990s so predates \cite{adeleke1996classification}.) 

Several questions on these limit constructions are posed in \cite[Section 6]{almazaydeh2021jordan}. This paper answers Problem 6.4 and sheds light on Problem 6.6, but the other problems remain.  We also mention that a more precise classification of closed {\em oligomorphic} primitive Jordan groups may be feasible. There are further questions on the model theory of $\mathcal{M}_B$ and $\mathcal{M}_D$ -- for example we expect that the analysis of indiscernible sequences in Lemmas~\ref{ind-D} and \ref{ind-B} should yield that they are dp-minimal and distal. We also ask whether the construction of $\mathcal{M}_B$ can be adapted to put $\mathbb{R}$-tree structure on each betweenness relation, and whether there are analogues in which these structures are discrete, possibly with interesting group-theoretic consequences.

We give background on limits of betweenness relations and $D$-relations in Section 2. Proposition~\ref{homogenizable}  and Theorem~\ref{growth} are proved in Section 3, and Theorem~\ref{max}  in Section 4. For the relevant model-theoretic background (in particular, NIP theories, and dp-minimality) \cite{simon2015guide} should suffice as a reference. 

\section{Background on the constructions}
In this section we give an overview of the structures $\mathcal{M}_B$ and $\mathcal{M}_D$. We shall assume some familiarity with the treelike structures examined in detail in \cite{adeleke1998relations}, namely 
semilinear orders, betweenness relations, $C$-relations and $D$-relations. Some background on these is also given in \cite{bhattmacph2006jordan} and \cite{almazaydeh2021jordan}.

First, we recall some standard permutation group terminology. A permutation group $(G,X)$ is {\em primitive} if $G$ preserves no proper non-trivial equivalence relation on $X$. For $k>1$ we say $G$ is {\em $k$-transitive} (respectively, {\em $k$-homogeneous}) if it is transitive on the collection of ordered (respectively, unordered) $k$-subsets of $X$, and that $(G,X)$ is {\em highly transitive} if it is $k$-transitive for all $k$. If $k\geq 2$ we say that $(G,X)$ is {\em $k$-primitive} if it is $(k-1)$-transitive and for distinct $x_1,\ldots, x_{k-1}\in X$, $G$ acts primitively on $X\setminus\{x_1,\ldots,x_{k-1}\}$. 

Next, we give the key definition from \cite{adeleke1996classification}.

    \begin{mydef}(\cite{adeleke1996classification}, Definition 2.1.9) \label{limits} \em If $(G, X)$ is an infinite Jordan group we say that $G$ preserves a {\em limit of $D$-relations} if
there are: a linearly ordered set $(J, \leq)$ with no least element, a  chain $(Y _i :i\in J)$
of subsets of $X$ and  chain $(H_i: i\in J)$ of subgroups of $G$ with $Y_i\supset Y_j$ and $H_i>H_j$ whenever $i<j$, such that the following hold:
\begin{enumerate}[(i)]
    \item for each $i, H_i =G_{(X \backslash Y_i) }$, and $H_i$ is transitive on $Y_i$ and has a unique non-trivial
maximal congruence $\sigma_i$ on $Y_i$;
\item for each $i$, $(H_i, Y_i / \sigma_i)$ is a 2-transitive but not 3-transitive Jordan group preserving a $D$-relation;
\item $\bigcup(Y_i: i \in J)= X$;
\item  $(\bigcup(H_i: i \in J), X)$ is a 2-primitive but not 3-transitive Jordan group;
\item $\sigma_j\supseteq \sigma_i |_{Y_j}$ if $i< j$;
\item $\bigcap(\sigma_i: i\in J)$ is equality in $X$;
\item $(\forall g\in G) (\exists i_0 \in J) (\forall i<i_0)(\exists j\in J) (Y_i ^g=Y_j \wedge g^{-1} H_{i} g= H_j)$;
\item for any $x \in X, G_x$ preserves a $C$-relation on $X\setminus \{ x \}$.
\end{enumerate}

We say that $G$ preserves a {\em limit of betweenness relations} if the same condition holds, except that in (ii), `$D$-relation' is replaced by `betweenness relation'. 

\end{mydef}

  We suspect this definition is not optimal, and in particular that the {\em sequence} of  sets $Y_i$ indexed by $J$ should be replaced by a {\em $G$-invariant} collection of sets $Y_i$ which is semilinearly ordered under inclusion -- as happens for the constructions in this paper. As commented in Section 1, the problem then is to show that the classification of primitive Jordan groups from \cite{adeleke1996classification} still holds with this modified definition. 

  Since it is analogous to the above, and will be needed in Section 4, we also give the definition of the notion {\em limit of Steiner systems} which also occurs in the main theorem of \cite{adeleke1996classification}.

  \begin{mydef}(\cite{adeleke1996classification}, Definition 2.1.10) \label{limits-steiner} \em If $(H, X)$ is an infinite Jordan group we say that $H$ preserves a
 {\em limit of Steiner systems} on $X$ if for some $n>2$, $(H,X)$ is $n$-transitive but not $(n+1)$-transitive, and there is a totally ordered index set $(J,\leq)$ with no greatest element, and an increasing chain $(X_j:j\in J)$ of subsets of $X$ such that:
\begin{enumerate}
\item[(i)] $\bigcup(X_j:j\in J)=X$;
\item[(ii)] for each $j\in J$, $H_{\{X_j\}}$ is $(n-1)$-transitive on $X_j$ and preserves a non-trivial Steiner $(n-1)$-system on $X_j$;
\item[(iii)] if $i<j$ then $X_i$ is a subset of a block of the $H_{\{X_j\}}$-invariant Steiner $(n-1)$-system on $X_j$.
\item[(iv)] for all $g\in H$ there is $i_0\in J$, dependent on $h$, such that for every $i>i_0$ there is $j\in J$ such that $X_i^h=X_j$ and the image under $h$ of every block of the Steiner system on  $X_i$ is a  block of the Steiner system on $X_j$;
\item[(v)] for every $j\in J$, the set $X\setminus X_j$ is a Jordan set for $(H,X)$.
\end{enumerate}
\end{mydef}

Next, we give an informal description of the structure $\mathcal{M}_B$ from \cite{bhattmacph2006jordan} -- see e.g. Proposition 5.3. The structure may be viewed as of 
 form $\mathcal{M}_B^0$ having a single ternary relation $L$, though it is in fact built by an amalgamation construction in a rather richer language, so  is called $\mathcal{M}_B^1$ in our notation here. We describe $\mathcal{M}_B$ in terms of relations preserved by the automorphism group. Let $G:=\Aut(\mathcal{M}_B)$.

First, there is (in $\mathcal{M}_B^{\eq}$) a 0-interpretable dense lower semilinear order $(J,<)$ which is a meet-tree; this is a partial order such that for all $a\in J$, $\{x\in J:x<a\}$ is totally ordered by $<$, and such that any two elements of $J$ have a greatest lower bound. For each $a\in J$ the set $C_a$ of `cones' at $a$ is infinite. (Here a {\em cone} is an equivalence class of the equivalence relation $\rho_a$ on $\{x:a\leq x\}$ whereby $\rho_a x y$ if and only if there is $z\in J$ with 
$a<z\wedge z<x\wedge z<y$.) The set $J$ is an index set for a $G$-invariant family $(X_j:j\in J)$ of subsets of $M_B$, with $X_j\supset X_k$ whenever $j<k$, for $j,k\in J$. We have $\bigcup_{j\in J}X_j={M}_B$ and $\bigcap_{j\in J} X_j=\emptyset$. 

The group $G_j$ (the stabiliser of $j$) fixes $X_j$ setwise and preserves a $j$-definable equivalence relation $\sim_j$ on $X_j$. There is a $j$-definable betweenness relation $B_j$ on $Z_j:=X_j/\sim_j$. This is {\em dense} in the sense that for any distinct $x,y\in Z_j$ there is $z\in Z_j\setminus \{x,y\}$ with $B_j(z;x,y)$. It has {\em positive type}, that is, satisfies the axiom (B7) of \cite[Ch. 15]{adeleke1998relations}, namely
$$(\forall x,y,z)(\exists w)(B_j(w;x,y)\wedge B_j(w;x,z) \wedge B_j(w;y,z)).$$
If $a\in Z_j$ there is an equivalence relation $\equiv_a$ on $\{z\in Z_j:z\neq a\}$, whereby, for distinct $x,y\in Z_j\setminus\{a\}$ we have $x\equiv_a y$ if and only if $\neg B_j(a;x,y)$. We shall call the $\equiv_a$-classes {\em branches} at $a$ (they are called {\em sectors} in 
\cite{adeleke1998relations}), and write $W_a$ for the set (which is infinite) of branches at $a$. 

For $a\in X_j$ the set $\{x\in M_B: a\sim_j x\}$  is called a {\em pre-node} of $Z_j$, or {\em pre-node} at $j$, and denoted by $[a]_j$, or $[a]$ when $j$ is clear. The $\sim_j$-class containing $a$ when viewed as an imaginary is called a {\em node} of $Z_j$, and is denoted by $a/\sim_j$. Likewise if $a\in Z_j$ and $U$ is a branch of $Z_j$ at $a$, then $\{x\in X_j: x/\sim_j\in U\}$ is called a {\em pre-branch} of $Z_j$. 

There is for each $i\in J$ an $i$-definable bijection $f_i$ from the set $A_i$ of cones at $i$ to the set $Z_i$. This puts $i$-definably the betweenness structure of $Z_i$ onto the set of cones at $i$. For $i<j_0$ let $C_i(j_0)$ be the cone of $(J,<)$ at $i$ containing $j_0$. If $[a]$ is a pre-node at $j_0$ then there is a unique set $t$ of branches at the node $f_i(C_i(j_0))$ of $Z_i$ such that $[a]$ equals the union of the pre-branches corresponding to members of $t$. We shall write $g_{ij_0}(a)=t$.  In particular, it follows that if $i<j_0$ then
$\bigcup\{g_{ij_0}(a): [a] \mbox{~prenode at~} j_0\}$ is a union of pre-branches at $f(C_i(j_0))$, and is denoted by $g_i(j_0)$.  These maps ensure that, in an iterative way, the betweenness structure of the set $Z_{j_0}$ is imposed on  (a quotient of a subset of) the set of branches at $f_i(C_i(j_0))$. Thus, `higher' sets $Z_{j_0}$ impose structure on `lower' sets $Z_i$.

This gives essentially all the structure.
The relation $L$ is interpreted in $\mathcal{M}_B$ as follows: $\mathcal{M}_B\models L(x;y,z)$ if and only if  there is $j\in J$ such that $x/\sim_j,y/\sim_j,z/\sim_j$ are distinct and the relation $B_j(x/\sim_j;y/\sim_j,z/\sim_j)$ holds. It is shown in  \cite[Section 5]{bhattmacph2006jordan} that all the above structure is interpretable without parameters in  $\mathcal{M}_B^0:=({M}_B,L)$. For example, $J$ is identified with the quotient of a 0-definable subset of $M_B^3$ by a 0-definable equivalence relation. The structure $\mathcal{M}_B^1$ is constructed by Fra\"iss\'e amalgamation of a class of finite structures in a richer finite relational language (though this class is not closed under substructure, and $\mathcal{M}_B^1$ is not homogeneous and so does not have quantifier elimination in this language -- see Example~\ref{nonhomo ex}). It follows from the amalgamation construction that $\mathcal{M}_B^1$ is $\omega$-categorical -- see \cite[Section 4]{bhattmacph2006jordan}.

The basic symmetry properties of $\mathcal{M}_B$ are described below.
\begin{prop} \label{groupM_B} Let $G=\Aut(\mathcal{M}_B^1)$. Then the following hold.
\begin{enumerate}
    \item 
$G$ is 2-primitive and 3-homogeneous but not 3-transitive on ${M}_B$, and is transitive on the set $\{(x,y,z)\in {M}_B^3: \mathcal{M}_B^1\models L(x;y,z)\}$;
\item $G$ is transitive on the set $\{(i,j)\in J^2:i<j\}$;
\item for each $j\in J$, $G_j$ is transitive on the set of triples $\{(x,y,z): B_j([z];[y],[z])\mbox{~holds}\}$, and induces a 2-transitive Jordan group on $Z_j$ in which branches are Jordan sets;
\item each pre-node and pre-branch is a Jordan set for $G$, as is each set $X_j$;
\item the stabiliser in $G$ of any 3-set $\{x,y,z\}$ induces $C_2$ on $\{x,y,z\}$ and fixes $x$ if $L(x;y,z)$ holds (in particular, there is a unique choice of first element such that $L$ holds of the triple). 
\end{enumerate}

\end{prop}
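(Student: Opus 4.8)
The plan is to read all five items off the amalgamation construction of \cite{bhattmacph2006jordan}; in substance they are \cite[Proposition 5.3]{bhattmacph2006jordan} together with the analysis of the sets $Z_j$ there, so the work is to indicate how each follows. The central mechanism is that, by $\omega$-categoricity, two tuples lie in the same $G$-orbit precisely when they realise the same complete type, and the extension property of the amalgamation construction lets one build automorphisms realising a prescribed partial map once the relevant definable structure (the level $j$, the relations $\sim_j$ and $B_j$, and the derived relation $L$) is respected. I would first record the basic fact, established in \cite[Section 5]{bhattmacph2006jordan}, that $L$ is a well-defined $G$-invariant ternary relation on $M_B$ for which every $3$-subset $\{x,y,z\}$ has a \emph{unique} middle element, say $x$, with $L(x;y,z)$: choosing $j\in J$ low enough that $x,y,z$ lie in distinct $\sim_j$-classes, the middle element is the one picked out by $B_j$, and coherence across levels makes this independent of the choice of $j$.

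For (1) and (5), invariance of $L$ shows that $G$ is not $3$-transitive, since no automorphism can send the ordered triple $(x,y,z)$ with $L(x;y,z)$ to $(y,x,z)$. The extension property gives transitivity of $G$ on the set of ordered triples satisfying $L$, and since $L$ is symmetric in its last two arguments this transitivity also provides, for each such triple, an automorphism fixing $x$ and interchanging $y$ and $z$; together with uniqueness of the middle element this yields (5), that the setwise stabiliser of $\{x,y,z\}$ induces exactly $C_2$ and fixes $x$. Because every $3$-set has a unique middle, transitivity on $L$-triples upgrades at once to $3$-homogeneity, and transitivity of $G$ on points is a one-point instance of the extension property. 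The one genuinely non-formal ingredient is the primitivity half of $2$-primitivity: that $G_a$ acts primitively on $M_B\setminus\{a\}$. Here I would use the $a$-definable $C$-relation on $M_B\setminus\{a\}$ supplied by the construction, together with the fact that branches and pre-branches are Jordan sets (item (4)), to argue that no proper nontrivial $G_a$-invariant equivalence relation can survive: any such relation would have to be compatible with the $C$-relation, and the density and homogeneity of the construction force it to be trivial.

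For (2) and (3), transitivity of $G$ on comparable pairs $(i,j)$ with $i<j$ is the statement that any two such configurations in the $0$-interpretable meet-tree $(J,<)$ are conjugate, which follows from density of the semilinear order and the extension property. Item (3) is local to a single level: on $Z_j=X_j/\sim_j$ the relation $B_j$ is a dense betweenness relation of positive type, so by the theory of betweenness relations in \cite[Ch.~15]{adeleke1998relations} its automorphism group is $2$-transitive and has the sectors (here called branches) as Jordan sets; it then remains to check that $G_j$ induces at least this betweenness group on $Z_j$, which is exactly the transitivity of $G_j$ on $B_j$-triples, and this is again an instance of the extension property applied inside $X_j$ while fixing $j$ and everything below.

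Finally, for (4) each of the sets in question is shown to be a Jordan set by exhibiting enough automorphisms fixing its complement pointwise: given two points of a pre-node, pre-branch, or $X_j$, one uses the amalgamation property to extend the partial map moving one to the other by an automorphism trivial off the set, using that the complement imposes no further constraint on the internal structure of the set. I expect the main obstacle to be the primitivity clause of (1): unlike the other items it is not a pure transitivity or extension statement, and it requires genuinely ruling out invariant equivalence relations for $G_a$, for which the interplay between the induced $C$-relation and the Jordan-set structure of (4) must be used carefully.
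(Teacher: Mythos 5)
Your handling of the transitivity statements --- transitivity on $L$-triples, $3$-homogeneity via the unique ``middle'' element, failure of $3$-transitivity from invariance of $L$, the $C_2$ action on $3$-sets, and transitivity on comparable pairs in $J$ --- matches the paper's route exactly: the paper states that these parts ``follow just from the homogeneity properties ensured by the Fra\"iss\'e construction,'' and that is what you do. The genuine gap is in your treatment of the Jordan assertions, namely item (4) and the second half of item (3). You propose to prove that pre-nodes, pre-branches and the sets $X_j$ are Jordan sets by ``using the amalgamation property to extend the partial map moving one point to the other by an automorphism trivial off the set.'' This step fails: the amalgamation construction only extends isomorphisms between \emph{finite} substructures (indeed only those lying in the class $\mathcal{B}$), whereas here the complement of the Jordan set is infinite, so the partial map you must extend --- the identity on $M_B\setminus [a]$ together with a prescribed move inside $[a]$ --- has infinite domain. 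The paper flags precisely this point: ``The Jordan condition is more subtle, as it involves extending partial isomorphisms with infinite domain, as the complement of the Jordan sets arising is infinite.''

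The paper's actual argument (following \cite{bhattmacph2006jordan}) is to exhibit explicitly a subgroup of $G$ fixing $M_B\setminus[a]$ pointwise and acting on the pre-node $[a]$ as an iterated wreath product in the manner of \cite[Section 6]{cameron1987some}, using the chain of definable equivalence relations $E_i\subseteq F_i$ on $[a]$ (for $i<j_0$) and the induced $C$-relation $C_{[a]}$. Your supporting claim that ``the complement imposes no further constraint on the internal structure of the set'' is exactly what must be proved, and it is false as stated: the levels $i<j_0$ do induce structure on $[a]$ via the $E_i$ and $F_i$, and the content of the proof is that every permutation of $[a]$ respecting this induced structure (or at least a transitive group of such permutations, the wreath product) extends to an automorphism of $\mathcal{M}_B^1$ fixing the complement. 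For the same reason, in item (3) you cannot obtain the Jordan property of branches of $Z_j$ from the abstract theory of dense positive-type betweenness relations in \cite{adeleke1998relations}: that theory concerns $\Aut(Z_j,B_j)$, and says nothing about which of its elements are induced by elements of $G_j$ fixing the relevant complement pointwise. (Your sketch of $2$-primitivity in (1) is also only a gesture --- compatibility with the $C$-relation plus ``density and homogeneity'' is not an argument --- but there the paper likewise defers to \cite{bhattmacph2006jordan}, so the decisive defect of the proposal is the Jordan-set step.)
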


All the symmetry properties apart from those involving Jordan groups follow just from the homogeneity properties ensured by the Fra\"iss\'e construction. The Jordan condition is more subtle, as it involves extending partial isomorphisms with infinite domain, as the complement of the Jordan sets arising  is infinite. One first shows that any pre-node $[a]$ is a Jordan set, essentially by showing that a certain group  embeds in $G$ which fixes the complement of $[a]$ in $M_B$ and acts on $[a]$ as an iterated wreath product in the manner, for example, of \cite[Section 6]{cameron1987some}. The point here is essentially that if $[a]$ is a pre-node  at $j_0$ and $i<j_0$, then because $[a]$ is the union of a set $t$ of pre-branches at $f(C_{i}(j_0))$, there are two $ij_0 a/\sim_{j_0}$-definable equivalence relations $E_i$ and $F_i$ on $[a]$. The $F_i$-classes are the pre-branches at $f(C_j(i))$ contained in $t$, and the $E_i$-classes are the pre-nodes at $i$ contained in $t$. In particular, $E_i$ refines $F_i$ and for $i<k<j_0$ we have that $F_k\supset E_k\supset F_i\supset E_i$. There is an 
$a/E_{j_0}$-definable $C$-relation $C_{[a]}$ on $[a]$, with
$C_{[a]}(x;y,z)$  holding if either $y=z\neq x$ or $x,y,z$ are distinct and for some $i<j_0$, some $F_i$-class contains $y,z$ and omits $x$.

\begin{figure}
	\begin{center}
			\includegraphics[scale=.7]{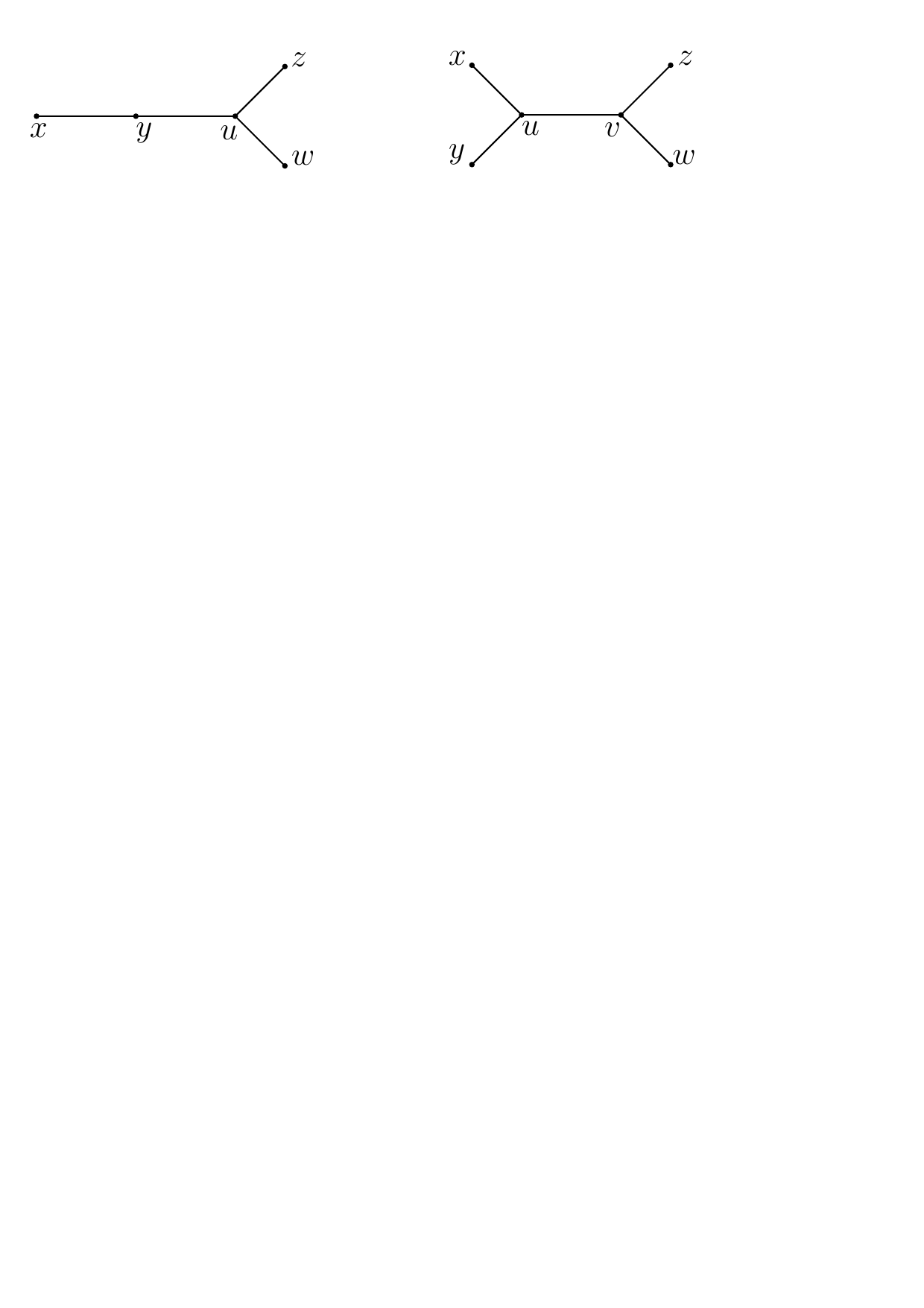}
		\end{center} 
  \caption{The configurations witnessing $N(x,y;z,w)$ on the left and $S(x,y;z,w)$ on the right.}
  \label{fig:NS}
\end{figure}

In \cite{bhattmacph2006jordan}, the actual amalgamation construction is carried out in a richer language with, as well as the ternary relation symbol $L$, three arity 4 relation symbol $L', N, S$. In $\mathcal{M}_B^1$, $L'(x;y,z;w)$ holds if there is $j\in J$ such that $w\not\in X_j$ and $x,y,z$ are $\sim_j$-inequivalent elements of $X_j$ with the relation
$B_j(x/\sim_j,y/\sim_j,z/\sim_j)$ holding. We have $\mathcal{M}_B^1\models N(x,y;z,w)$ if there is $u\in {M}_B$ and some $j\in J$ with $x,y,z,w,u$ all $\sim_j$-inequivalent such that their $\sim_j$-classes carry the  structure in $(Z_j,B_j)$ depicted in Figure \ref{fig:NS}. Likewise, $\mathcal{M}_B^1\models S(x,y;z,w)$ if there are $u,v\in \mathcal{M}_B$ and some $j\in J$ with $x,y,z,w,u,v$ all $\sim_j$-inequivalent and as  depicted in Figure \ref{fig:NS} in $(Z_j,B_j)$. 

For the structure $\mathcal{M}_D$, essentially the same assertions hold, except that the ternary betweenness relation $B_j$ on $Z_j$ is replaced  by an (arity four) $D$-relation $D_j$, and pre-nodes are replaced by `pre-directions'. There are some differences in the proof, and also some inaccuracies in the proof in \cite{bhattmacph2006jordan} are corrected in \cite{almazaydeh2021jordan}. 

A $D$-set $(N,D)$ can be viewed as a set of `directions' of an underlying (and interpretable) $B$-set $(P,B)$, where $P$ is a 0-interpretable quotient of $N^3$. We shall refer to the elements of a $B$-set as `vertices', and talk of `branches' at a vertex as above, except that now a branch at a  vertex $a$ of such $P$ may be viewed either as a subset of $P$, or (in a natural way) as a subset of $N$. Given distinct $x,y,z\in N$, we write $\ram(x,y,z)$ for the unique node $a$ of $P$ such that $x,y,z$ lie in different branches at $a$. The structure $\mathcal{M}_D$ has an interpretable semilinear order $(J,\leq)$
indexing a family $(X_j:j\in J)$ of subsets of $\mathcal{M}_D$ semilinearly-ordered by reverse inclusion, again with $\bigcup_{j\in J} X_j=\mathcal{M}_D$ and $\bigcap_{j\in J} X_j=\emptyset$.  Again, there is a $j$-definable equivalence relation $\sim_j$ on $X_j$, and $Z_j=X_j/\sim_j$ carries the structure of a $D$-relation $D_j$. However, there is an extra phenomenon: if $(P_j,B_j)$ is the betweenness relation arising as above from the $D$-set $(Z_j,D_j)$, and $a\in P_j$, then there is a unique  $(a,j)$-definable branch $U_j$ at $a$, called the `special branch' at $a$.

In \cite{almazaydeh2021jordan} the amalgamation construction of $\mathcal{M}_D$ takes place in a language $\mathcal{L}_D^1$ which has a ternary relation symbol $L$, relation symbols $L',S$ of arity 4, and  $S'$ of arity 5, $R$ of arity 6, and $Q$ of arity 7. In the structure $\mathcal{M}_D^1$ in this language, we write $L(x;y,z)$ if there is $j\in J$ with $x,y,z\in X_j$ inequivalent modulo $\sim_j$ such that $x/\sim_j$ lies in the special branch at $\ram(x/\sim_j,y/\sim_j,z/\sim_j)$; we put $L'(x;y,z;w)$ if in addition such $X_j$ does not contain $w$. We put $\mathcal{M}_D^1\models S(x,y;z,w)$ if there is $j\in J$ with $x,y,z,w$ distinct in $X_j$ modulo $\sim_j$, and with $D_j(x/\sim_j,y/\sim_j;z/\sim_j,w/\sim_j)$ holding; we have $\mathcal{M}_D^1\models S'(x,y;z,w;u)$ if the above holds and in addition $u\not\in X_j$. In the above setting, we say that $L(x;y,z)$ and $S(x,y;z,w)$ are {\em witnessed} in the $D$-set $(Z_j,D_j)$. Now $\mathcal{M}_D^1\models R(x;y,z:u;v,w)$ if $L(x;y,z)$ and $L(u;v,w)$ hold and are witnessed in the same $D$-set, and likewise $Q(x,y;z,w:p;q,s)$ holds if $S(x,y;z,w)$ and $L(p;q,s)$ hold and are witnessed in the same $D$-set. 

In \cite[Lemma 4.1]{almazaydeh2021jordan}, it is shown that $L',S',Q,R$ are 0-definable in $({M}_D,L,S)$. In fact, it is easily seen that $S$ is 0-definable in $({M}_D,L)$, though we omit the details (it will in fact follow from Theorem~\ref{max}). Thus, we may view $\mathcal{M}_D$ as a structure, denoted $\mathcal{M}_D^0$, with just the single ternary relation symbol $L$. 

The analogue of Proposition~\ref{groupM_B} for $\mathcal{M}_D$ is the following. 
\begin{prop} \label{groupM_D} Let $G=\Aut(\mathcal{M}_D^1)$. Then the following hold.
\begin{enumerate}
    \item 
$G$ is 2-primitive and 3-homogeneous but not 3-transitive on ${M}_D$, and is transitive on the set $\{(x,y,z)\in M_D^3: \mathcal{M}_D^1\models L(x;y,z)\}$;
\item $G$ is transitive on the set $\{(i,j)\in J^2:i<j\}$;
\item for each $j\in J$, $G_j$ is transitive on the set of triples $\{(x,y,z): L(x;y,z)\mbox{~holds witnessed in~} Z_j\}$, and induces a 2-transitive Jordan group on $Z_j$ in which branches are Jordan sets;
\item each pre-direction and pre-branch is a Jordan set for $G$, as is each set $X_j$;
\item the stabiliser in $G$ of any 3-set $\{x,y,z\}$ induces $C_2$ on $\{x,y,z\}$ and fixes $x$ if $L(x;y,z)$ holds (in particular, there is a unique choice of first element such that $L$ holds of the triple).
\end{enumerate}

\end{prop}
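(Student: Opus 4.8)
\section*{Proof proposal}

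The plan is to prove Proposition~\ref{groupM_D} in parallel with its betweenness analogue Proposition~\ref{groupM_B}, replacing the relations $B_j$ by the $D$-relations $D_j$ and pre-nodes by pre-directions; the full verifications are carried out in \cite{almazaydeh2021jordan}, following \cite{bhattmacph2006jordan}, and I would import them, signalling only the places where the $D$-relation and, crucially, the \emph{special branches} $U_j$ force a change. The assertions split into two groups: the symmetry assertions (2-primitivity, 3-homogeneity and failure of 3-transitivity in~(1); the transitivity statements in~(2), the first clause of~(3), and~(5)), which I would deduce from the amalgamation construction of $\mathcal{M}_D^1$ in $\mathcal{L}_D^1$; and the Jordan-set assertions (the remaining clauses of~(3) and~(4)), which are the substantial part, since the Jordan sets in question have infinite complement.

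For the first group the key input from \cite[Section~4]{almazaydeh2021jordan} is that any isomorphism between two finite substructures of $\mathcal{M}_D^1$ that both lie in the defining amalgamation class extends to an element of $G$; this is weaker than homogeneity (which fails), but suffices because the configurations in question lie in the class. Since $\mathcal{L}_D^1$ has no symbols of arity below three, any two distinct points have the same type, so $G$ is 2-transitive; transitivity on $\{(x,y,z):L(x;y,z)\}$ is the corresponding statement for $L$-triples. The construction moreover ensures that every 3-subset of $M_D$ has a unique apex, namely the element $x$ with $L(x;y,z)$; together with the symmetry of $L$ in its last two arguments this gives 3-homogeneity and~(5), while uniqueness of the apex prevents any automorphism from sending an ordered triple $(x,y,z)$ with $L(x;y,z)$ to $(y,x,z)$, so $G$ is not 3-transitive. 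Clause~(2) reduces to a single configuration $i<j$ in the interpreted semilinear order $(J,\le)$, and the first clause of~(3) is transitivity of $G_j$ on $L$-witnessed triples in the interpreted $D$-set $(Z_j,D_j)$ together with its 2-transitivity on $Z_j$. Finally 2-primitivity follows, exactly as in the betweenness case, by checking that there is no proper nontrivial $G_a$-invariant equivalence relation on $M_D\setminus\{a\}$, using the $a$-definable $C$-relation there.

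The main obstacle is the Jordan assertions, and I would treat the case of a pre-direction first, following \cite[Section~5]{almazaydeh2021jordan}. Fix a pre-direction $[a]$ at $j_0$; the aim is to build a subgroup of $G$ fixing $M_D\setminus[a]$ pointwise and acting transitively on $[a]$. The mechanism is the iterated-wreath-product picture used for $\mathcal{M}_B$: for each $i<j_0$ one has $ij_0(a/\!\sim_{j_0})$-definable equivalence relations $E_i\subseteq F_i$ on $[a]$, nested so that $F_k\supset E_k\supset F_i\supset E_i$ for $i<k<j_0$, whose classes are the pre-directions at $i$ and the pre-branches at $f(C_i(j_0))$ inside the relevant family of branches; these furnish $[a]$ with an $a$-definable $C$-relation and an inverse-limit structure on which a large group acts. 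The genuinely new difficulty, absent for $\mathcal{M}_B$, is that each vertex of each $P_j$ carries a distinguished special branch $U_j$: the equivalence relations, and the permutations of $[a]$ one writes down, must be chosen compatibly with the special branches at every level, and one must verify that such a permutation, extended by the identity off $[a]$, is a partial isomorphism of $\mathcal{M}_D^1$ that completes to an element of $G$ by a back-and-forth over the amalgamation class with infinite domain. Checking that the special-branch data is preserved throughout this back-and-forth is the crux. Granting that pre-directions are Jordan sets, the remaining claims follow by standard arguments: a pre-branch is a union of pre-directions along a branch and inherits the transitivity; each $X_j$ is handled by combining this with transitivity along $(J,\le)$; and the Jordan property of branches of $Z_j$ for the induced action of $G_j$ is the image of the pre-branch statement under the quotient map $X_j\to Z_j$.
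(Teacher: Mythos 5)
Your proposal takes essentially the same route as the paper: the paper presents Proposition~\ref{groupM_D} as a background result imported from \cite{almazaydeh2021jordan}, with exactly your decomposition — the symmetry clauses following from the partial homogeneity supplied by the amalgamation construction (extension of isomorphisms between finite substructures lying in $\mathcal{D}$), and the Jordan clauses reduced to showing that pre-directions are Jordan sets via an embedded iterated wreath product acting compatibly with the nested definable equivalence relations $E_i\subseteq F_i$ and the induced $C$-relation on $[a]$, the special branches being the point where the $D$-case diverges from the betweenness case. So the proposal is correct in approach and matches the paper's (deliberately sketch-level) treatment.
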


We comment briefly on the amalgamation constructions of $\mathcal{M}_B^1$ and $\mathcal{M}_D^1$. In the case of $\mathcal{M}_B^1$, a certain class $\mathcal{B}^1$ of finite $\mathcal{L}_B^1$-structures, namely 
 `trees of $B$-sets' is identified. We refer to \cite{bhattmacph2006jordan} for details, but roughly, such a structure is isomorphic to a finite substructure $K$ of $\mathcal{M}_B^1$ such that the finite $B$-sets have `positive type', that is, if $x,y,z$ are distinct elements of such a $B$-set then there is $t\in K$ in the $B$-set lying between any two of $x,y,z$. It is shown that the class $\mathcal{B}$ has the amalgamation property, and $\mathcal{M}_B^1$ is the Fra\"iss\'e limit. The construction of $\mathcal{M}_D^1$ is similar, obtained by amalgamating a class $\mathcal{D}$ of finite $\mathcal{L}^1_D$-structures. The members of $\mathcal{D}$ are isomorphic to finite substructures $K$ of $\mathcal{M}_D^1$ such that any node of any (finite) $D$-set has a special branch within the structure $K$. See \cite[Section 3]{almazaydeh2021jordan} for a detailed description of $\mathcal{D}$. It follows from the construction of $\mathcal{M}_B^1$ that any isomorphism between substructures of $\mathcal{M}_B^1$ which lie in $\mathcal{B}$ extends to an automorphism of $\mathcal{M}_B^1$. The corresponding assertion holds for $\mathcal{M}_D^1$.

\section{Proofs of Proposition~\ref{homogenizable} and Theorem~\ref{growth}}

We first prove the non-homogeneity assertion in Proposition~\ref{homogenizable}(i), through the following two examples. The essential point is that though the structures are Fra\"iss\'e limits of  classes of finite structures with the amalgamation property, these classes are not closed under substructure.

\begin{ex}\label{nonhomo ex}
\em In the structure $\mathcal{M}_D^1$, consider the set
 $C=\{x,y,z,w,u\}$ and $C'=\{x',y',z',w' ,u'\}$ and consider the two structures $A$ and $A'$ depicted in Figures \ref{fig:A} and \ref{fig:A'} respectively, with $C<A$ and $C'<A'$. We may view $A$ and $A'$ as substructures of $\mathcal{M}_D^1$. It can be checked that the only $\mathcal{L}_D^1$-relations holding on $C$ and $C'$ are $L$ and $S$, and that the map $h: (x,y,z,w,u)\to (x',y',z',w',u')$ is an isomorphism. However, $h$ does not extend to an automorphism of $\mathcal{M}_D^1$, since the relations $S(z,y;u,x)$ and $S(z,w;x,y)$ are witnessed in different   $D$-sets in $\mathcal{M}_D^1$ whereas $S(z',y';u',x')$ and $(z',w';x',y')$ are witnessed in the same $D$-set. 
 In the diagrams, which adopt the same conventions as \cite[Figure 2]{almazaydeh2021jordan}, the arrow indicates which branch is special at each node. 

  A similar construction shows that $\mathcal{M}_B^1$ is not homogeneous, but we omit the details. Figures \ref{fig:B1} and \ref{fig:B2} illustrate two configurations for $(a,b,c,d,e,f)$ in $\mathcal{B}^1$ which are isomorphic but not in the same orbit.

  \begin{figure}
	\begin{center}
			\includegraphics[scale=.6]{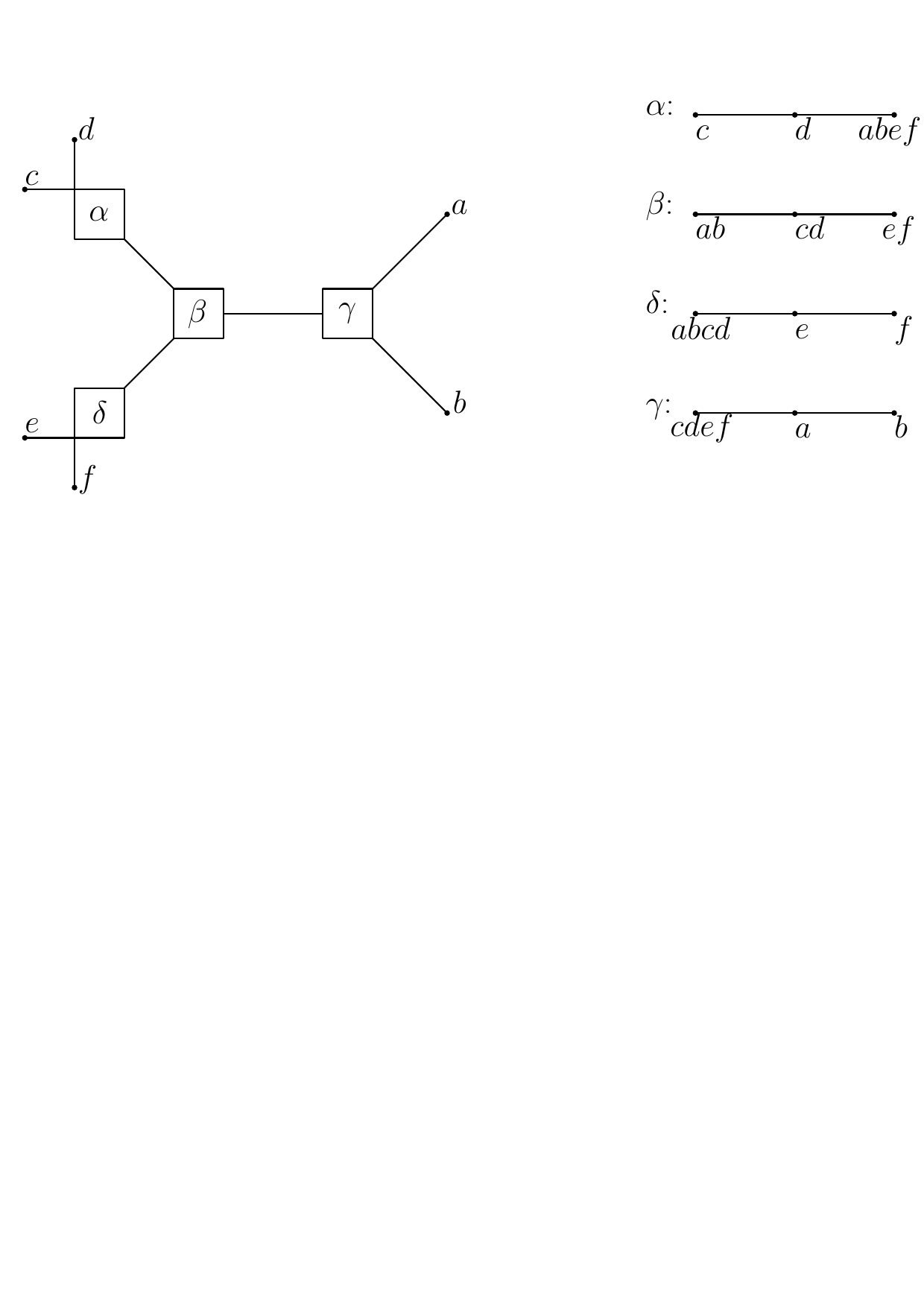}
		\end{center} 
  \caption{The first of two non-isomorphic substructures of $\mathcal{M}_B^1$ in the same orbit.}
  \label{fig:B1}
\end{figure}

  \begin{figure}
	\begin{center}
			\includegraphics[scale=.6]{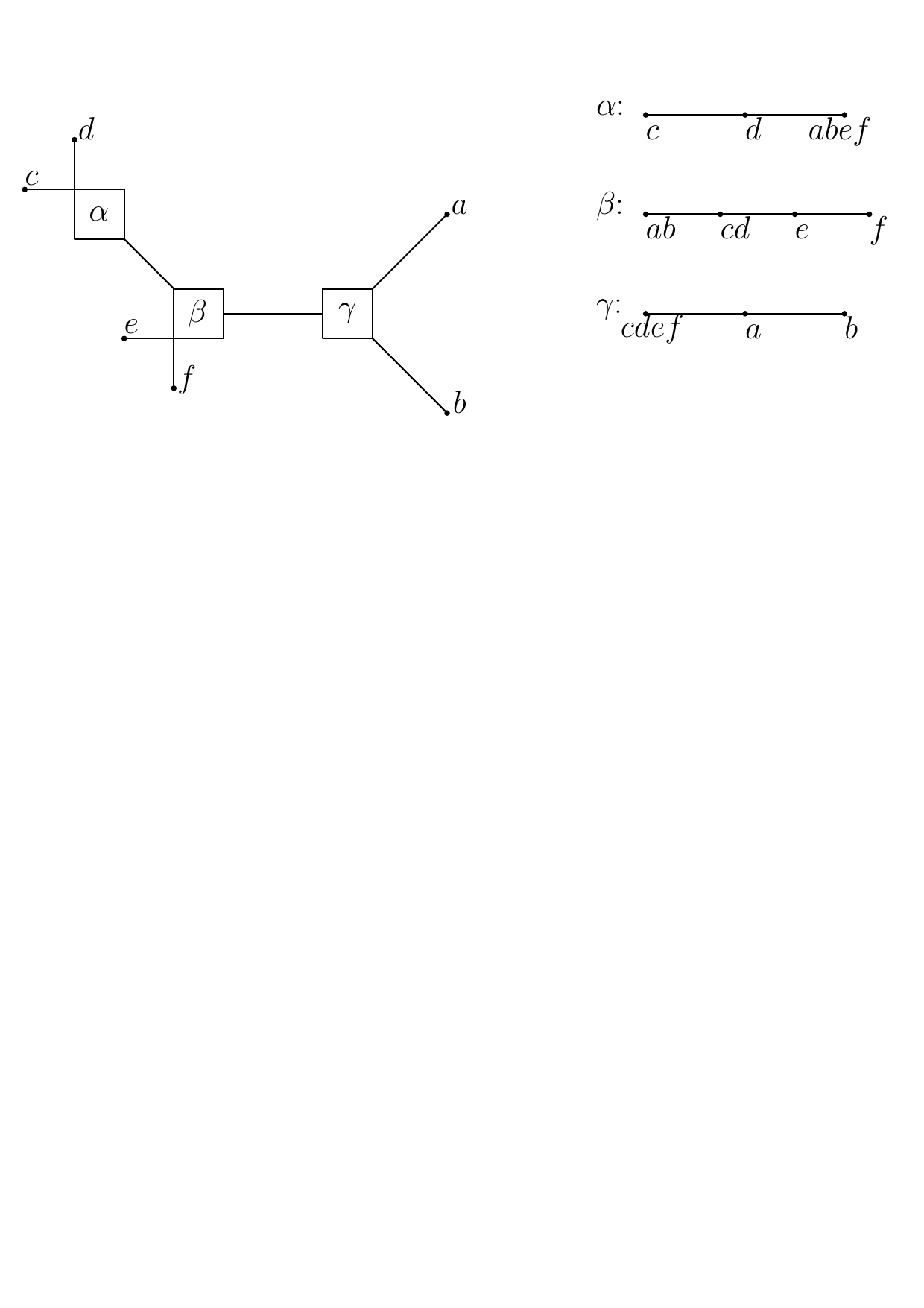}
		\end{center} 
  \caption{The second of two non-isomorphic substructures of $\mathcal{M}_B^1$ in the same orbit.}
  \label{fig:B2}
\end{figure}

\end{ex}

  \begin{figure}
	\begin{center}
			\includegraphics[scale=.9]{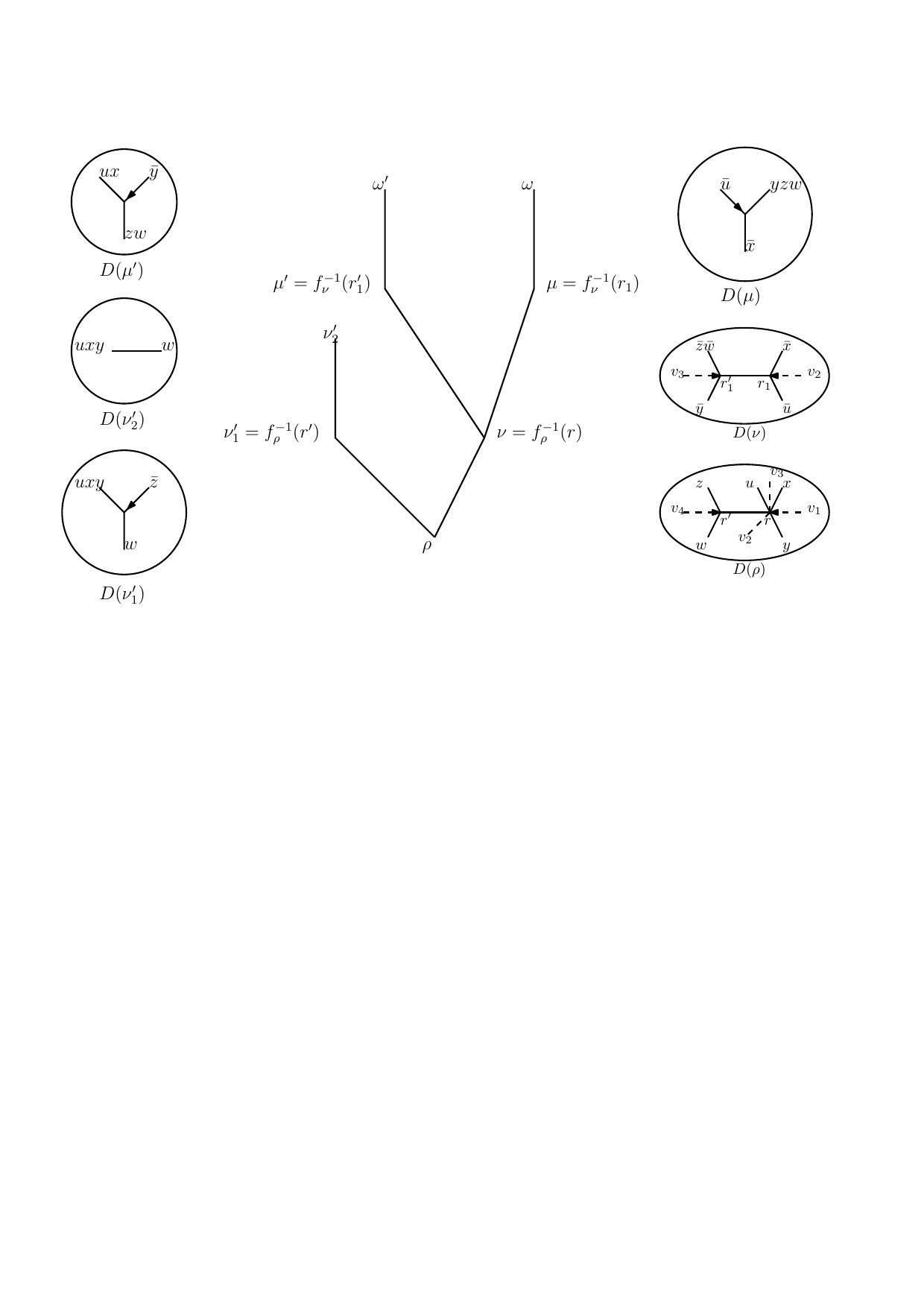}
		\end{center} 
  \caption{The structure $A \subset \mathcal{M}_D^1$.}
  \label{fig:A}
\end{figure}

  \begin{figure}
	\begin{center}
			\includegraphics[scale=.9]{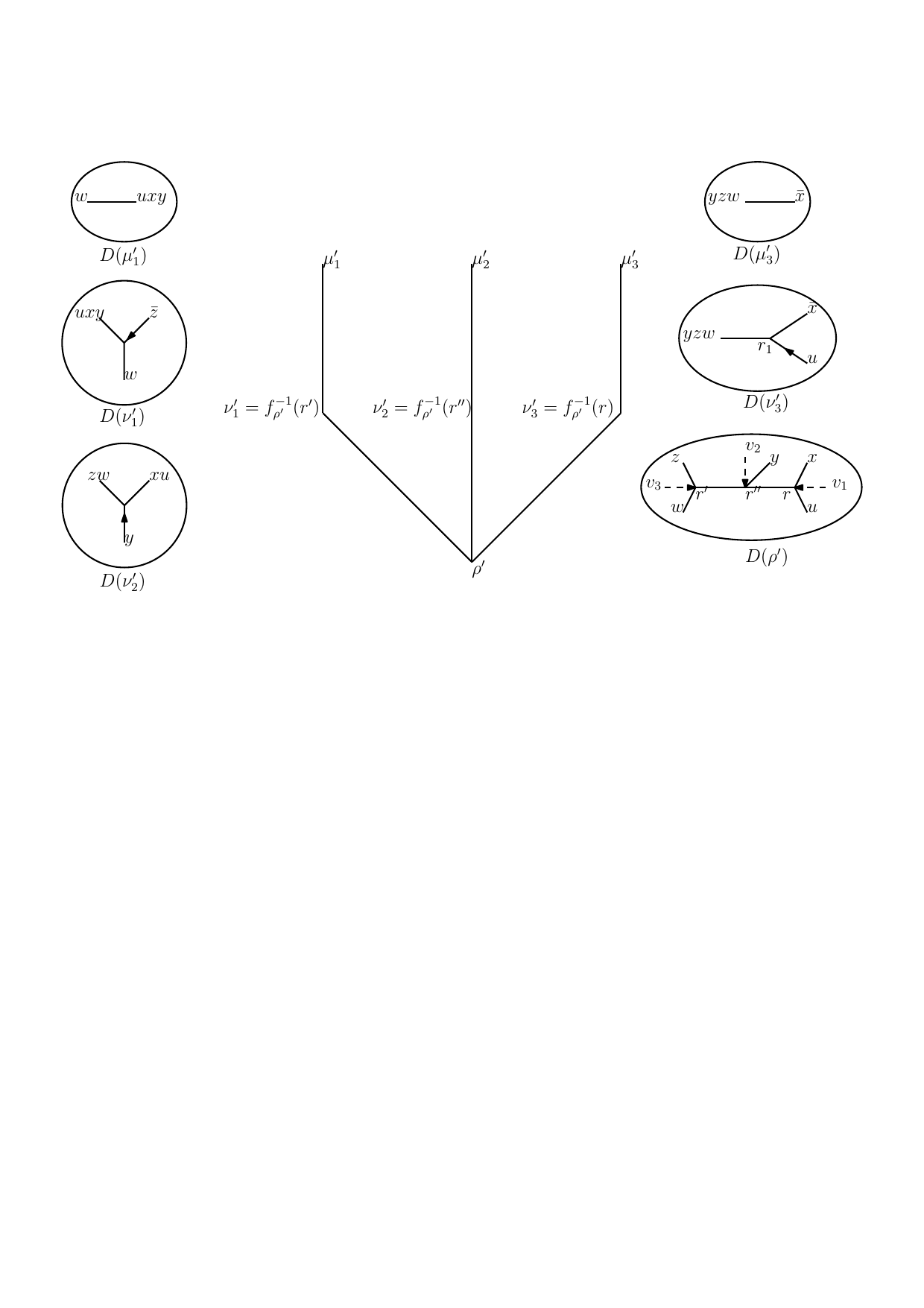}
		\end{center} 
  \caption{The structure $A' \subset \mathcal{M}_D^1$.}
  \label{fig:A'}
\end{figure}

Next, we prove homogenizability of $\mathcal{M}_D^1$. We define a relational language $\mathcal{L}_D^2$ with the relation symbols $L$ and  $S$  interpreted on  ${M}_D$ as before, but, in place of the symbols
$L',S', Q, R$ of $L_D$, the language $\mathcal{L}_D^2$ has symbols $P$ of arity 6, $Q^{\leq}$ and $Q^{\geq}$ of arity 7,  and $T$ of arity 8, interpreted on ${M}_D$ as follows: we have $P(x;y,z:p;q,s)$ if $L(x;y,z)$ holds witnessed at $i$, $L(p;q,s)$ holds witnessed at $j$, and $i\leq j$. Likewise, 
$T(x,y;z,w: p,q;s,t)$ holds if $S(x,y;z,w)$ holds witnessed at $i$, $S(p,q;s,t)$ holds witnessed at $j$, and $i\leq j$. Similarly
$Q^{\leq}(x,y;z,w:p;q,s)$ if $S(x,y,z,w)$ is witnessed at $i$ and $L(p;q,s)$ is witnessed at $j$ with $i\leq j$, and $Q^{\geq}(x,y;z,w:p;q,s)$ if $S(x,y,z,w)$ is witnessed at $i$ and $L(p;q,s)$ is witnessed at $j$ with $i\geq j$.

\begin{lem} \label{MDhom}
    \begin{enumerate}
        \item $\Aut(\mathcal{M}_D^1)=\Aut(\mathcal{M}_D^2)$.
        \item The $\mathcal{L}_D^2$-structure $\mathcal{M}_D^2$ is homogeneous.
        \end{enumerate}
\end{lem}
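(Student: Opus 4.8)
The plan is to prove the two parts separately, deriving the homogeneity in part (2) from the Fra\"iss\'e extension property of $\mathcal{M}_D^1$ recorded at the end of Section~2, together with a reconstruction lemma showing that the quantifier-free $\mathcal{L}_D^2$-type of a finite set recovers its closure in the amalgamation class $\mathcal{D}$.

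\emph{Part (1).} Both inclusions come from interdefinability. Since $\mathcal{L}_D^2$ contains $L$ and $S$, and $L',S',Q,R$ are $0$-definable in $(M_D,L,S)$ by \cite[Lemma 4.1]{almazaydeh2021jordan}, every $g\in\Aut(\mathcal{M}_D^2)$ preserves all $\mathcal{L}_D^1$-relations, so $\Aut(\mathcal{M}_D^2)\leq\Aut(\mathcal{M}_D^1)$. Conversely, the semilinear order $(J,\leq)$, the family $(X_j)$, the relations $\sim_j$, and the $D$-relations $D_j$ are all $0$-interpretable already in the reduct $\mathcal{M}_D^0=(M_D,L)$, hence are preserved by $\Aut(\mathcal{M}_D^1)$. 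The node of $J$ at which a given instance of $L$ (resp.\ $S$) is witnessed is therefore a function of the tuple that is $0$-definable in $\mathcal{M}_D^1$, and so are the order-comparisons between such nodes that define $P,Q^{\leq},Q^{\geq}$ and $T$; thus $\Aut(\mathcal{M}_D^1)$ preserves these four relations and $\Aut(\mathcal{M}_D^1)\leq\Aut(\mathcal{M}_D^2)$. The one routine point to nail down first is that the witnessing node is genuinely single-valued, since the well-definedness of $P,Q^{\leq},Q^{\geq},T$ rests on this; I would read this off from the definition of $\mathcal{M}_D$.

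\emph{Part (2).} By part (1), $\mathcal{M}_D^2$ has the same automorphism group as $\mathcal{M}_D^1$, so to prove homogeneity it suffices to show that every isomorphism $h:A\to A'$ between finite $\mathcal{L}_D^2$-substructures of $\mathcal{M}_D^2$ extends to an automorphism. The strategy is to pass to $\mathcal{D}$. First I would \emph{close up} $A$ to a finite $\widehat{A}\subseteq M_D$ with $\widehat{A}\in\mathcal{D}$, by adjoining, for each node of each finite $D$-set generated by $A$, a witness for its special branch (these exist in $M_D$ since $\mathcal{M}_D^1$ is the Fra\"iss\'e limit of $\mathcal{D}$), and similarly close up $A'$ to $\widehat{A'}$. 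Because any isomorphism between members of $\mathcal{D}$ sitting inside $\mathcal{M}_D^1$ extends to an automorphism of $\mathcal{M}_D^1=\mathcal{M}_D^2$, it then suffices to lift $h$ to an $\mathcal{L}_D^1$-isomorphism $\widehat{h}:\widehat{A}\to\widehat{A'}$.

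The heart of the matter is the reconstruction lemma: the quantifier-free $\mathcal{L}_D^2$-type of $A$ determines the $\mathcal{L}_D^1$-isomorphism type of $\widehat{A}$ by a recipe compatible with $h$. I would argue this in three layers. First, the finite set of nodes of $J$ at which relations among $A$ are witnessed, together with their meets, forms a finite sub-meet-tree of $(J,\leq)$; the predicates $P,Q^{\leq},Q^{\geq},T$ record exactly the $\leq$- and $\geq$-comparisons between these witnessing nodes, and since $J$ is a meet-tree these comparisons (with incomparability detected as the failure of both) determine the finite tree up to isomorphism. Hence $h$ induces an isomorphism of these trees of levels. Second, at each level $j$ the relations $L$ and $S$ among the $A$-points present in $X_j$ encode the induced finite $D$-set $D_j$ on their $\sim_j$-classes, so $h$ matches the $D$-sets level by level. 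Third, $L$ already records, at the ramification node $\ram$ of any witnessing triple, which branch is special, and the closure adds precisely the missing special-branch witnesses; I would check that this closure is canonical given the level-tree and the level-wise $D$-sets, so that the isomorphism assembled from the first two layers extends over the adjoined witnesses to the desired $\widehat{h}$.

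\emph{Main obstacle.} The principal difficulty is this reconstruction lemma, and within it the bookkeeping of special branches: I must verify that the special-branch data needed to realise $A$ as a restriction of a tree of $D$-sets in $\mathcal{D}$ is fully pinned down by the $\mathcal{L}_D^2$-type --- equivalently, that the two configurations distinguished in Example~\ref{nonhomo ex} are separated by $T$, and that no further obstruction of this kind survives in $\mathcal{L}_D^2$. Concretely, I would confirm that whenever two instances of $S$ (or of $L$) could a priori be witnessed either in the same $D$-set or in different ones, the value of $T$ (resp.\ $P,Q^{\leq},Q^{\geq}$) decides which, and that the meet-tree reconstruction leaves no freedom in the relative placement of witnessing nodes. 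Once this is in place, the Fra\"iss\'e extension property delivers the extending automorphism and hence homogeneity.
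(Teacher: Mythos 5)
Your proposal matches the paper's proof in both structure and substance: part (1) via interdefinability in both directions (the paper gives explicit quantifier-free definitions of $L',S',Q,R$ from $P,Q^{\leq},Q^{\geq},T$, while you invoke \cite[Lemma 4.1]{almazaydeh2021jordan} plus the $0$-definability of the witnessing levels, which is an equally valid variant), and part (2) by closing $A_1,A_2$ up to structures in $\mathcal{D}$ through adjoining special-branch witnesses, reconstructing the tree of $D$-sets level by level from the new comparison relations, and then applying the Fra\"iss\'e extension property of $\mathcal{M}_D^1$. The ``reconstruction lemma'' you isolate as the main obstacle is exactly what the paper's inductive construction (lowest $D$-set first, then recursing through cones) carries out, so your plan is correct and essentially identical to the paper's argument.
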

\begin{proof}
    \begin{enumerate}
        \item It is clear from our description above of $\mathcal{M}_D^1$ that $G=\Aut(\mathcal{M}_D^1)$ preserves $Q^{\leq}$, $Q^{\geq}$, $P$ and $T$. For the converse, observe that for distinct $x,y,z,t\in M_D$, $L'(x;y,z;t)$ holds if and only if $L(t;y,z)$ is witnessed in a $D$-set strictly below  that witnessing $L(x;y,z)$, and this information is given by $P$. Likewise, $S'(x,y;z,w;t)$ holds if and only if $Q^{\geq}$ holds with $(x,y,z,w)$ in the first four entries and some ordering of $\{z,w,t\}$ in the last three. Finally, we have 
        $R(x;y,z;p;q,s)$ if and only if $P(x;y,z:p;q,s)$ and $P(p;q,s:x;y,z)$ both hold, and $Q(x,y;z,w:p;q,s)$ holds if and only if $Q^{\geq}$ and $Q^{\leq}$ both hold of $(x,y,z,w,p,q,s)$. 
        \item It suffices by the partial homogeneity of $\mathcal{M}_D^1$ to prove the following assertion. Let
        $A_1,A_2$ be finite subsets of $M_D$ and $f:A_1\to A_2$ an isomorphism of the induced substructures of $M_D^2$. Then there are finite $A_1'\supseteq A_1$ and $A_2'\supseteq A_2$ such that $f$ extends to an isomorphism between the substructures of $\mathcal{M}_D^1$ induced on $A_1'$ and $A_2'$ respectively, and these $\mathcal{L}_D^1$-structures on $A_1'$ and $A_2'$ lie in $\mathcal{D}$. 

        We build $A_1'$ and $A_2'$ as follows. Using $P$, $Q^{\leq}$, $Q^{\geq}$, and $T$,  we may identify which tuples satisfying $S$ or $L$ are witnessed in $A_1$ and $A_2$ respectively at the lowest level (indexed in $J$ by $i_1$ and $i_2$ respectively). These determine a `lowest' $D$-set of $A_1$ and $A_2$ respectively, and $f$ induces an isomorphism of these $D$-sets, and hence of the corresponding $B$-sets interpretable in them. Using $Q^{\leq}, Q^{\geq}$ we may  also identify which $L$-relations are witnessed in these lowest $D$-sets, and hence identify the special branches at certain nodes. The isomorphism $f$ induces a bijection between the nodes of $A$ with no special branches (in this lowest $D$-set) and the corresponding nodes of $A_2$. We add to $A_1$ and $A_2$ a direction (in the special branch) at each of these nodes. The map $f$ extends to these expanded structures. We now continue inductively. For each point $a$ of the $B$-set induced by $A_1$ at level $i_1$, $f$ induces a map from the cone at $i_1$ corresponding to $a$ to the cone at $f(i_1)$ corresponding to $f(a)$. We may identify those tuples of $A_1$ which satisfy $S$ or $L$ witnessed at the lowest level within this cone at $i_1$, thereby identifying a $D$-set of $A_1$ directly above the lowest level, and $f$ maps this isomorphically to the corresponding $D$-set of $A_2$. Again, we may add  elements to $A_1$ and $A_2$ to ensure that each node of the corresponding $B$-sets has a special branch. Repeating this argument, we eventually obtain an isomorphism $f:A_1'\to A_2'$, as claimed.

    \end{enumerate}
\end{proof}
We give without details a similar result for $\mathcal{M}_B$, namely the following.
\begin{lem} \label{MBhom}
The structure $\mathcal{M}_B^1$ is homogenizable.
\end{lem}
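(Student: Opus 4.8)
The plan is to mirror the proof of Lemma~\ref{MDhom}, with the completion to the class $\mathcal{D}$ replaced by completion to the amalgamation class $\mathcal{B}^1$ of trees of $B$-sets. First I would introduce a finite relational language $\mathcal{L}_B^2$ retaining the within-$B$-set symbols $L,N,S$ of $\mathcal{L}_B^1$ (which record betweenness and the two four-point configurations of Figure~\ref{fig:NS}), but replacing the symbol $L'$, whose role is to encode the relative position in $(J,<)$ at which configurations are witnessed, by a family of level-comparison symbols in the style of $P,Q^{\leq},Q^{\geq},T$. Concretely, for each ordered pair of the basic within-$B$-set relations $L,N,S$ one introduces a symbol asserting that the first holds witnessed in a $B$-set at a level $i$ and the second witnessed at a level $j$ with $i\leq j$ in $J$. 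Since $G=\Aut(\mathcal{M}_B^1)$ acts on the interpretable order $(J,<)$, each such symbol is preserved by $G$.

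Next I would establish the analogue of Lemma~\ref{MDhom}(1), namely $\Aut(\mathcal{M}_B^1)=\Aut(\mathcal{M}_B^2)$. One inclusion is the preservation just noted. For the converse, one checks, exactly as in the proof of Lemma~\ref{MDhom}(1), that $L'(x;y,z;w)$ is recovered from the level-comparison symbols (it asserts that the $L$-configuration on $(x,y,z)$ is witnessed strictly above the level at which $w$ drops out of the family $(X_j)$, which is precisely the comparison information they carry). Hence $L'$, and so every symbol of $\mathcal{L}_B^1$, is $0$-definable in $\mathcal{M}_B^2$, and conversely every new symbol is $0$-definable in $\mathcal{M}_B^1$; it follows that the two structures have the same automorphism group.

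Then I would prove homogeneity of $\mathcal{M}_B^2$ by the extension argument of Lemma~\ref{MDhom}(2). Given finite $A_1,A_2\subseteq M_B$ and an $\mathcal{L}_B^2$-isomorphism $f:A_1\to A_2$, it suffices to produce finite $A_1'\supseteq A_1$ and $A_2'\supseteq A_2$ lying in $\mathcal{B}^1$ such that $f$ extends to an $\mathcal{L}_B^1$-isomorphism $A_1'\to A_2'$, and then to invoke the partial homogeneity of $\mathcal{M}_B^1$. As in the $\mathcal{M}_D$ case one proceeds level by level: the level-comparison relations let one reconstruct the stratification of $A_1$ (and, via $f$, isomorphically of $A_2$) into its constituent $B$-sets, starting from the lowest. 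The defining requirement of $\mathcal{B}^1$ is that each finite $B$-set have positive type, i.e.\ satisfy (B7), so the completion consists in adjoining to each triple $x,y,z$ of a $B$-set its median witness $w$ with $B_j(w;x,y)\wedge B_j(w;x,z)\wedge B_j(w;y,z)$; since $f$ preserves the betweenness structure of each $B$-set it extends to these medians, and iterating up the tree yields the desired $f:A_1'\to A_2'$ with both structures in $\mathcal{B}^1$.

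The main obstacle, exactly as in Lemma~\ref{MDhom}, is the bookkeeping: one must verify that the level-comparison symbols genuinely let $f$ recover the full layered tree-of-$B$-sets decomposition — including how nodes of lower $B$-sets sit inside the cones of higher ones via the maps $g_{ij}$ — so that medians are adjoined consistently on the two sides and the completed structures really belong to $\mathcal{B}^1$ rather than being merely abstractly isomorphic $B$-sets. Checking that adjoining medians forces no further $\mathcal{L}_B^2$-relations beyond those already determined by $f$, and that the process terminates after finitely many levels, is the routine but delicate part of the argument.
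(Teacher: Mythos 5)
Your proposal follows essentially the same route as the paper's own (sketched) proof: replace $L'$ by level-comparison symbols among $L,N,S$ in the style of $P,Q^{\leq},Q^{\geq},T$, verify the automorphism groups coincide, and then complete finite $\mathcal{L}_B^2$-isomorphic sets to structures in $\mathcal{B}$ by canonically adjoining the missing ramification points level by level, so that the partial homogeneity from the amalgamation construction applies. This matches the paper's argument in both structure and substance, and your attention to the bookkeeping issues (consistency of the completion on both sides via the maps $g_{ij}$) is exactly the part the paper also flags as routine but delicate.
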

\begin{proof} We just give a sketch.  We replace the original language $\mathcal{L}_B^1$ by a language $\mathcal{L}_B^2$ which has the relation symbols, $L, N,S$ but not $L'$, and also has relation symbols which express that if the $B$-set indexed by $i$ witnesses $L$, $N$ or $S$ for some tuple, and the $B$-set indexed by $j$ witnesses $L,N$ or $S$ for some other tuple, then $i\leq j$. It can be checked that these relations are all 0-definable in $\mathcal{M}_B^1$ and that 
$L'$ is 0-definable from them. Furthermore, given a finite subset $A$ of $M$, we can identify the lowest $B$-set of $\mathcal{M}_B^1$ in which a tuple from $A$ satisfies one of $L,N$ or $S$, and which tuples satisfy such relations in this $B$-set. Using these relations, we may uniquely reconstruct this lowest $B$-set restricted to $A$. It may not have positive type, i.e. may miss certain ramification points, but we can add these in a canonical manner. We  proceed inductively to embed $A$ canonically in a substructure of $\mathcal{M}_B^1$ lying in $\mathcal{B}$.
Now if $f:A_1\to A_2$ is an isomorphism of finite substructures of $\mathcal{M}_B^2$, then $f$ extends to an isomorphism $f':A_1' \to A_2'$ of finite substructures of $\mathcal{M}_B^1$ lying in $\mathcal{B}$, and the homogeneity properties given by the amalgamation construction of $\mathcal{M}_B^1$ ensure that $f'$ extends to an automorphism of $\mathcal{M}_B^1$. 
\end{proof}

\begin{figure}
	\begin{center}
			\includegraphics[scale=.7]{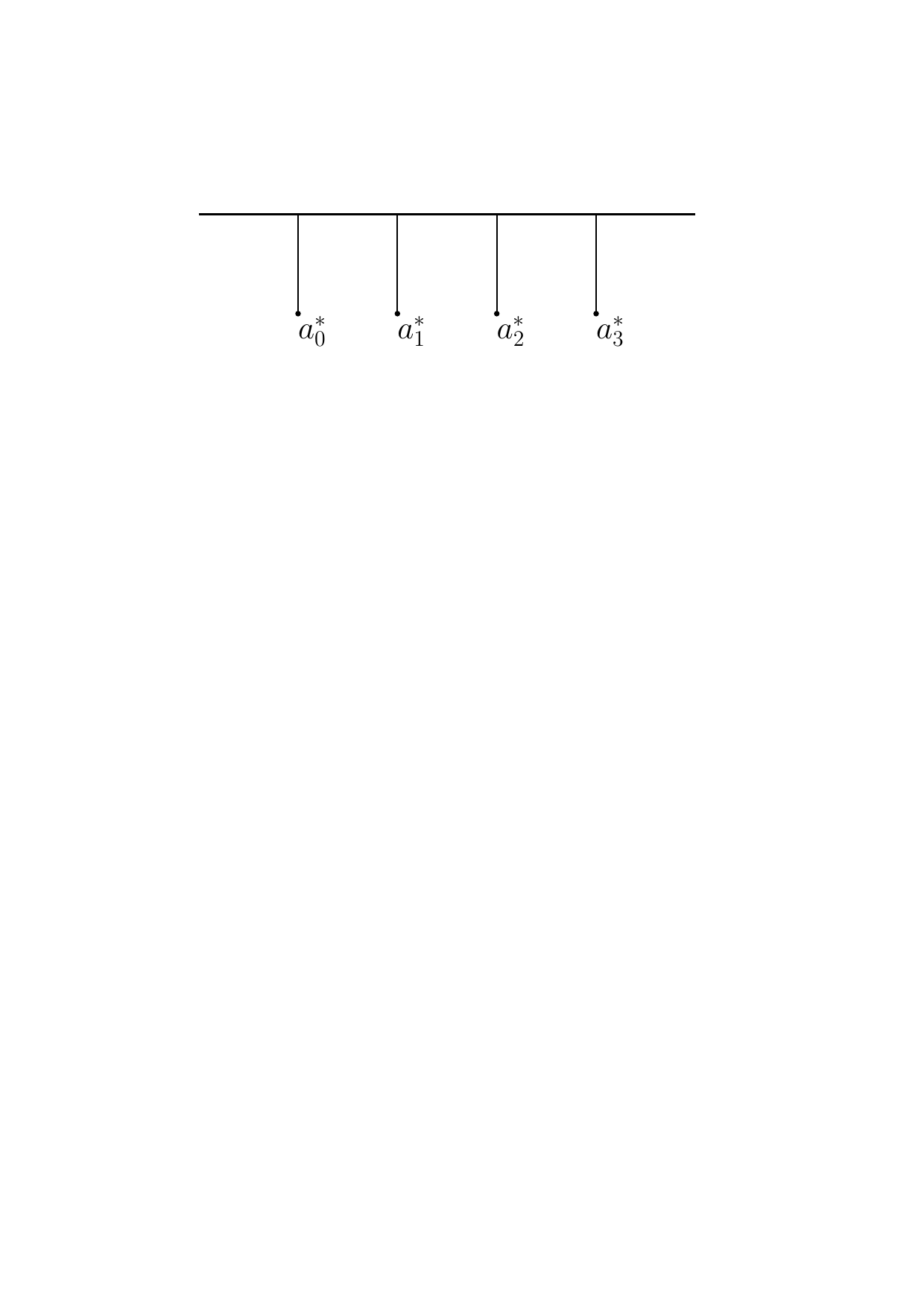}
		\end{center} 
  \caption{A $D$-set indiscernible sequence.}
  \label{fig:Dind}
\end{figure}

Before proving that $\mathcal{M}_B$ and $\mathcal{M}_D$ are NIP, we shall obtain a description of indiscernible sequences of singletons, useful also for later results.

\begin{lem} \label{ind-D}
Let  $I=(a_i:i\in \omega)$ be an indiscernible  sequence of singletons in $\mathcal{M}_D$. Then one of the following holds.
\begin{enumerate}
    \item[(i)] $I$ is a {\em $D$-set indiscernible}; that is, there is $j\in J$ indexing the $D$-set $(Z_j,D_j)$ such that if $a_i^*:=a_i/\sim_j$ for each $i\in \omega$, then the $a_i^*$ are in $Z_j$ as depicted in Figure \ref{fig:Dind}. That is, for any $i<j<k<l$ we have $S(a_i,a_j;a_k,a_l)$ witnessed in $Z_j$. 
    \item[(ii)] $I$ is an {\em $S$-free indiscernible}, meaning that no quadruple from $I$ satisfies $S$. Now one of:
    
    (a) $I$ is {\em upwards $S$-free}; that is, 
there is an increasing sequence
       $(n_i:i\in \omega)$ in $J$ such that  for any  $i_1,i_2,i_3\in \omega$ with $i_1<i_2<i_3$, the relation $L(a_{i_1};a_{i_2},a_{i_3})$ holds and is witnessed at $n_{i_1}$. 

(b) $I$ is {\em downwards $S$-free}; that is, there is a decreasing sequence
       $(n_i:i\in \omega, i\geq 3)$ in $J$ such that  for any  $i_1,i_2,i_3\in \omega$ with $i_1<i_2<i_3$, the relation $L(a_{i_3};a_{i_1},a_{i_2})$ holds and is witnessed at $n_{i_3-2}$.

    \item[(iii)]  $I$ is a {\em mixed indiscernible}; that is, there is a decreasing sequence $n_1>n_2>\ldots$ in $J$ such that for each $i\geq 1$ we have: $a_0,\ldots,a_{i+1}$ all lie in distinct branches at the vertex $v_i$ of $Z_{n_i}$, $a_j\sim_{n_i} a_{i+2}$ for $j>i+2$, and $S(a_p,a_q;a_{i+1},a_r)$ is witnessed in $Z_{n_i}$ for all $r>i+1$ and distinct $p,q\leq i$.

\end{enumerate}
\end{lem}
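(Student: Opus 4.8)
The plan is to exploit $\omega$-categoricity to reduce the classification to a finite-type analysis, and then to assemble the finite data into one of the listed global configurations. Since $\mathcal{M}_D$ is $\omega$-categorical and, by Lemma~\ref{MDhom}, has the homogeneous presentation $\mathcal{M}_D^2$, the quantifier-free $\mathcal{L}_D^2$-type of an increasing tuple $(a_{i_1},\dots,a_{i_k})$ depends only on $k$. Classifying $I$ therefore amounts to determining, for each $k$, the $\mathcal{L}_D^2$-isomorphism type of $\{a_0,\dots,a_{k-1}\}$ and checking that these cohere. Because the relations of $\mathcal{L}_D^2$ are $L$, $S$ together with the witnessing-level comparisons $P,Q^{\leq},Q^{\geq},T$, this data is exactly the pattern of $L$- and $S$-relations on sub-tuples, together with the relative positions in $(J,\leq)$ of the levels at which these relations are witnessed. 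Throughout I would freely use the symmetry properties of Proposition~\ref{groupM_D}, in particular $3$-homogeneity.

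First I would analyse triples. By Proposition~\ref{groupM_D}(5) every $3$-set has a unique first element for which $L$ holds, so for $i<j<k$ exactly one of $L(a_i;a_j,a_k)$, $L(a_j;a_i,a_k)$, $L(a_k;a_i,a_j)$ holds; by indiscernibility the position of this \emph{apex} (least, middle, or greatest in the order on $\omega$) is the same for all triples. I would then pass to quadruples and split on whether some (equivalently, by indiscernibility, every) increasing $4$-tuple satisfies $S$. If no quadruple satisfies $S$, we are aiming at case (ii): using that $L$ holds on every triple with constant apex position, I would show that the witnessing levels are pairwise comparable and monotone in the index, reading off the direction and the apex from the relation $P$, which compares the levels of two $L$-instances. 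Apex-least with upward levels yields (ii)(a) and apex-greatest with downward levels yields (ii)(b); the remaining combinations, including apex-middle, must be excluded by showing that they would force an $S$-relation or violate the meet-tree axioms of $(J,\leq)$.

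If instead some quadruple satisfies $S$, I am aiming at (i) or (iii). Here I would locate, for each finite initial segment, the lowest level at which a relation among its elements is witnessed, and use the $D$-relation axioms together with the special-branch data (recovered via $Q^{\leq},Q^{\geq}$) to reconstruct the $D$-set at that level. Two subcases arise according to whether a single $j\in J$ serves the whole sequence: if so, all $a_i$ lie in distinct $\sim_j$-classes and the $S$-pattern forces the staircase of Figure~\ref{fig:Dind}, giving (i); if not, the levels strictly decrease and $\sim_{n_i}$ collapses an ever-shorter tail of the sequence as $i$ grows, which is exactly configuration (iii). In the latter case the interaction of the $S$-instances among $a_0,\dots,a_{i+1}$ with the collapse $a_j\sim_{n_i}a_{i+2}$ for $j>i+2$ has to be pinned down level by level.

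The main obstacle is the passage from local (finite-tuple) data to a coherent global configuration. Locally every initial segment is a finite tree of $D$-sets in $\mathcal{D}$, but $(J,\leq)$ is only a meet-tree, so a priori the witnessing levels of different sub-tuples need not be comparable; the crux is to show that indiscernibility forces them to form a chain (increasing in (ii)(a), decreasing in (ii)(b) and (iii)) and to identify the limiting collapse pattern in the mixed case. I expect this to need a careful induction on $k$, tracking how $a_k$ attaches to the configuration on $\{a_0,\dots,a_{k-1}\}$ and using indiscernibility to make the attachment uniform, together with the positive type and denseness of the $D$-sets to rule out the spurious patterns (in particular apex-middle without $S$) and to guarantee that the monotone level behaviour and the prescribed collapse are the only survivors.
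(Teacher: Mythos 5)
Your proposal follows essentially the same route as the paper's (sketched) proof: both start from the trichotomy on triples forced by the unique apex of $L$ (Proposition~\ref{groupM_D}(5)) together with indiscernibility, then analyse how $a_3$ attaches to the configuration witnessing the relations on $(a_0,a_1,a_2)$, and iterate; your $S$/no-$S$ split and your claims about which apex/level-direction combinations survive match exactly the outcomes of the paper's subcases. The only difference is bookkeeping: where you defer the crux to an induction on $k$, the paper executes it as an explicit enumeration of the positions of $a_3$ (outside $X_{n_0}$, in a given branch at $v_0$, or $\sim_{n_0}$-equivalent to some $a_i$), each either contradicting indiscernibility or yielding one of the three configurations.
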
 

\begin{proof} The argument is elementary but there are many cases to consider, so we only sketch the idea. 

Exactly one of $L(a_0;a_1,a_2)$, $L(a_2;a_0,a_1)$ or $L(a_1;a_0,a_2)$ holds. Thus, by indiscernibility exactly one of the following three cases holds. 

{\em Case 1.} Whenever $i,j,k\in \omega$ are distinct with $i<\Min\{j,k\}$ we have $L(a_i;a_j,a_k)$. We suppose that $L(a_0;a_1,a_2)$ is witnessed in $Z_{n_0}$, with $\ram(a_0,a_1,a_2)=v_0$.

{\em Case 2.} Whenever $i,j,k\in \omega$ are distinct with $k>\Max\{i,j\}$ we have $L(a_k;a_i,a_j)$.

{\em Case 3.} Whenever $i<j<k$ or $k<j<i$ we have $L(a_j;a_i,a_k)$.

{\em Case 1a.} Suppose $L(a_0,a_1,a_2)$ is witnessed in $Z_{n_0}$ and $a_3$ is $\sim_{n_0}$-inequivalent to $a_0,a_1,a_2$. Then 
automorphisms inducing $(a_0,a_1,a_2,a_3)\mapsto (a_0,a_1,a_2,a_i)$ for $i>3$ show that $L(a_0;a_1,a_i)$ is witnessed in $Z_{j_0}$, and maps $(a_0,a_1,a_2,a_3)\mapsto (a_0,a_1,a_i,a_j)$ (for $3\leq i<j$) show that all the $a_i$ are $\sim_{n_0}$ inequivalent and meet at the same vertex $v_0$ of $Z_{n_0}$. Using indiscernibility, there is $n_1>n_0$ such that the same configuration holds in $Z_{n_1}$ (but with $a_0$ omitted, and with $a_1$ in the special branch at the central vertex $v_1$). Now iterate this argument, to obtain that $I$ is an upwards $S$-free indiscernible. 

{\em Case 1b.} $a_3\not\in X_{n_0}$. In this case, $L(a_3;a_0,a_1)$ is witnessed in a $D$-set below $Z_{n_0}$, contrary to indiscernibility. 

{\em Case 1c.} $a_3$ lies in the same branch at $v_0$ in $Z_{n_0}$ as $a_0$. In this case $L(a_3;a_1,a_2)$ holds witnessed in $Z_{n_0}$, contradicting $L(a_1;a_2,a_3)$.

{\em Case 1d.} $a_1\not\sim_{n_0} a_3$, and $a_3$ lies in the same branch at $v_0$ in $Z_{n_0}$ as $a_1$. In this case the $D$-set which witnesses $L(a_1;a_2,a_3)$ must also witness $L(a_1;a_0,a_3)$ contradicting that $L(a_0;a_1,a_3)$ holds by indiscernibility.

{\em Case 1e.} $a_1\sim_{n_0} a_3$.
Now a $D$-set below $Z_{n_0}$ will have the $D$-set configuration of Case 1d and is eliminated for the same reason.

{\em Case 1f.} $a_3\not\sim_{n_0} a_2$ and $a_3$ lies in the same $Z_{n_0}$ branch as $v_0$ as $a_2$. Now an automorphism extending $(a_0,a_1,a_2,a_3)\mapsto (a_0,a_1,a_3,a_4)$ ensures that for distinct $p,q\in \{0,1,2\}$, $S(a_p,a_q;a_3,a_4)$ holds, witnessed in $Z_{n_0}$, and iteration of the argument shows that $I$ is a $D$-set indiscernible.

{\em Case 1g.} $a_3\sim_{n_0} a_2$. Now for $i>3$ an automorphism inducing
$(a_0,a_1,a_2,a_3)\mapsto (a_0,a_1,a_2,a_i)$  fixes the $D$-set witnessing $L(a_0;a_1,a_2)$, so ensures that $a_2\sim_{n_0} a_i$ holds. Now there is $n_1<n_0$ in $J$ and \ vertex $v_1$ in $Z_{n_1}$ such that $a_0,a_1,a_2$ are in distinct branches at $v_1$, and $a_3$ in the same branch as $a_2$ at $v_1$, with $a_3\not\sim a_2$. Consideration of where $L(a_1;a_2,a_3)$ is witnessed yields that $a_i\sim_{n_1} a_3$ for $i>3$, and it is easy to recover that $I$ is  mixed indiscernible. 

{\em Case 2a.} $a_0,a_1,a_2,a_3$ lie in distinct branches at $v_0$. In this case $L(a_2;a_3,a_0)$ holds, contradicting that $L(a_3;a_0,a_2)$ holds by indiscernibility. 

{\em Case 2b.} $a_3\not\in X_{n_0}$. Now there is $n_1>n_0$ in $J$ so that $a_0,a_1,a_2,a_3$ lie in distinct branches at vertex $v_1$ in $Z_{n_1}$  with $a_3$ in the special branch, and we obtain that $I$ is a downwards $S$-free indiscernible.

{\em Case 2c.} $a_3$ lies in the same branch at $v_0$ as $a_1$ or $a_2$. This would imply $L(a_2;a_3,a_1)$ or $L(a_2;a_3,a_0)$, both of which are false by indiscernibility.

{\em Case 2d.} $a_3\not\sim_{n_0} a_2$ but $a_2,a_3$ lie in the same branch at $v_0$. Arguments similar to Case 1f yield that $I$ is a $D$-set indiscernible. 

{\em Case 2e.} $a_2\sim_{n_0} a_3$. Now $I$ is a mixed indiscernible, as in Case  1g.

{\em Case 3a.}  $a_0,a_1,a_2,a_3$ lie in distinct branches at $v_0$. This yields that $L(a_1;a_2,a_3)$ holds, contradicting that $L(a_2;a_1,a_3)$ holds by indiscernibility.

{\em Case 3b.} $a_3\not\in X_{n_0}$. In this case, $L(a_3;a_0,a_1)$ is witnessed in a $D$-set below $X_{n_0}$, contradicting $L(a_1;a_0,a_3)$.

{\em Case 3c.} $a_3$ lies in the same branch at $v_0$ as $a_1$. Then $L(a_3;a_0,a_2)$, contradicting $L(a_2;a_0,a_3)$. 

{\em Case 3d.} $a_3$ lies in the same branch at $v_0$ as $a_0$. Then $L(a_1;a_2,a_3)$ holds, contradicting $L(a_2;a_1,a_3)$.

{\em Case 3e.} $a_3\sim_{n_0} a_2$. Now $I$ is a mixed indiscernible, much as in Case 1g.

{\em Case 3f.} $a_3\not\sim_{n_0}$ and $a_3,a_2$ lie in the same branch at $v_0$. Arguments similar to Case 1f show that $I$ is a $D$-set indiscernible.
\end{proof}

The following lemma gives a similar description of indiscernible sequences of singletons in $\mathcal{M}_B$. 

\begin{lem} \label{ind-B}

Let  $I=(a_i:i\in \omega)$ be an indiscernible sequence of singletons in $\mathcal{M}_B$. Then one of the following holds.
\begin{enumerate}
    \item[(i)] $I$ is a {\em $B$-set indiscernible}; that is, there is a $B$-set $(Z_j,B_j)$ such that one of the following holds.

    (a) $Z_j$ witnesses that $L(a_q;a_p,a_r)$ whenever $p,q,r\in \omega$ with $p<q<r$.

    (b) There are $(b_i:i\in \omega)$ such that $Z_j$ witnesses that $L(b_q;b_p,b_r)$ whenever $p,q,r\in \omega$ with $p<q<r$, and furthermore, $Z_j$ witnesses that $L(b_p;a_p,b_q)$ for any $p\neq q$ in $\omega$.

    \item[(ii)] One of the following holds. 
    
    (a) $I$ is an {\em upwards star-sequence}; that is, 
there is an increasing sequence
       $(n_i:i\in \omega)$ in $J$ such that in $Z_{n_i}$ the $a_j$ for $j>i$ lie in distinct branches at $a_i$, and $a_k\not\in X_{n_i}$ for $k<i$.

       (b) $I$ is a {\em downwards star-sequence}; this means that there is an decreasing sequence
       $(n_i:i\in \omega)$ in $J$ such that in $Z_{n_i}$ the elements $a_0,\ldots a_{i+1}$ lie in distinct branches at $a_{i+2}$, and $a_j\not\in X_{n_i}$ for $k>i+2$.

    \item[(iii)] One of the following holds.
    
    (a) $L(a_2;a_0,a_1)$ and $I$ is a {\em downwards mixed indiscernible}; that is, there is a decreasing sequence
    $n_1>\ldots$ in $J$ such that in $Z_{n_i}$ the elements $a_0,\ldots,a_{i+1}$ are $\sim_{n_i}$-inequivalent, the $a_j$ are pairwise $\sim_{n_i}$ equivalent for $j\geq i+1$, and there is a vertex  $v_i$ of $Z_{n_i}$ such that $a_0,\ldots  a_i$ lie in different branches at $v_i$ and $a_i,a_{i+1}$ lie in the same branch at $v_i$.

    (b) $I$ is an {\em upwards mixed indiscernible}; that is, there is an increasing sequence $n_0<n_1<\ldots$ in $J$, and $a_0,\ldots,a_{i-1}$ are $\sim_{n_i}$-equivalent and  the $a_j$ are $\sim_{n_i}$ inequivalent for $j\geq i-1$, and one of: 
    (I)
    $L(a_0;a_1,a_2)$,  there is one branch at $a_0$ containing $a_i$, and another branch at $a_0$ containing a vertex $v_i$ and $a_p$ for all $p>i$, with $a_0, a_{i+1},a_{i+2},\ldots $ all in different branches at $v_i$; or (II) $L(a_1;a_0,a_2)$, there is one branch at $a_i$ containing  $a_0$, and another branch at 
    $a_i$ containing a vertex $v_i$ and $a_p$ for all $p>i$, with $a_0, a_{i+1},a_{i+2},\ldots $ all in different branches at $v_i$;
    in $Z_{n_i}$.

\end{enumerate}
\end{lem}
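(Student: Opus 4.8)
The plan is to follow closely the (sketched) case analysis for Lemma~\ref{ind-D}, using that $\mathcal{M}_B$ behaves like $\mathcal{M}_D$ with the quaternary $D_j$ on each $Z_j$ replaced by the ternary betweenness $B_j$, and with no special branches to track. The starting point is Proposition~\ref{groupM_B}(5): for any three distinct $x,y,z\in M_B$ exactly one of $L(x;y,z)$, $L(y;x,z)$, $L(z;x,y)$ holds, so each triple has a well-defined \emph{between} element. Applying this to $\{a_i,a_j,a_k\}$ and invoking indiscernibility, the between element depends only on the order type of the indices, and there are exactly three principal cases: the between element has least index (Case~1), middle index (Case~3), or greatest index (Case~2).

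In each principal case I would fix a $B$-set $Z_{n_0}$ witnessing the relevant instance of $L$ on $\{a_0,a_1,a_2\}$, let $v_0$ be the ramification vertex of the three $\sim_{n_0}$-classes, and then run the finite analysis of where $a_3$ sits relative to $a_0,a_1,a_2$ in $Z_{n_0}$: whether $a_3\in X_{n_0}$; if so, whether $a_3\sim_{n_0}a_i$ for some $i\le 2$; and if not, which branch at $v_0$ contains $a_3/\sim_{n_0}$ (a new branch, or the branch of $a_0$, $a_1$, or $a_2$). Just as in Cases~1b--1e, 2a, 2c and 3a--3d of Lemma~\ref{ind-D}, most sub-configurations force an $L$-relation on $\{a_1,a_2,a_3\}$, or a witness for $L(a_3;a_0,a_1)$ at a strictly lower $B$-set, which contradicts the relation prescribed by the principal case together with indiscernibility; these are discarded. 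The surviving sub-configurations determine the named types: a genuinely new branch for $a_3$ leads to a $B$-set indiscernible (type~(i)) or to a star-sequence (type~(ii)); the case $a_3\notin X_{n_0}$ pushes the witness strictly downwards and produces the downward variants; and the case $a_3\sim_{n_0}a_i$, where $a_3$ only separates from $a_i$ at a strictly lower level, produces a mixed indiscernible (type~(iii)).

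Once the local picture at $\{a_0,a_1,a_2,a_3\}$ is fixed, I would promote it to the whole sequence by the usual iteration. An automorphism realising the appropriate shift, e.g.\ $(a_0,a_1,a_2,a_3)\mapsto(a_0,a_1,a_2,a_i)$, exists by the transitivity in Proposition~\ref{groupM_B}(1),(3) and the homogeneity supplied by the Fra\"iss\'e construction; it shows all later elements occupy the analogous positions, and repeating the analysis one level at a time, together with indiscernibility, yields the required monotone sequence $(n_i)$ in $J$. The direction of $(n_i)$ is forced by whether passing to the next element drives the witnessing $B$-set upwards (the upward star- and upward mixed types) or downwards (their downward counterparts).

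I expect two points to carry the real work. The first is the appearance of the auxiliary vertices $b_i$ in the $B$-set indiscernible of type~(i)(b): this occurs when the sequence is anchored to a separate monotone path of ramification points $b_i$, with each $a_i$ attached to the path at $b_i$ via $L(b_i;a_i,b_j)$, rather than lying on a single betweenness line itself; isolating this sub-case and distinguishing it cleanly from type~(i)(a) is the most delicate bookkeeping, and is a feature of the betweenness setting with no direct analogue among the forms of Lemma~\ref{ind-D}. The second is that, because $L$ here is a betweenness relation rather than a $D$-relation, one cannot separate configurations using the $D$-set relation $S$; instead the symmetry of $L$ in its last two arguments must be used to check that each retained configuration is consistent with \emph{all} $L$-instances forced by indiscernibility, and that the upward and downward mixed types of~(iii)(b) and the two $B$-set sub-cases of~(i) are genuinely distinct. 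As with Lemma~\ref{ind-D}, the argument is elementary, and only a sketch need be recorded.
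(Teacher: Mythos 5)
Your proposal takes essentially the same approach as the paper: the paper's entire recorded proof of this lemma is the single remark that it is very similar to the proof of Lemma~\ref{ind-D} with details omitted, and your sketch is precisely such an adaptation of that three-case analysis (principal case by position of the between element, sub-cases for $a_3$, then iteration by automorphisms and indiscernibility), in fact with more detail than the paper gives. Two side remarks in your write-up are inaccurate but harmless to the argument: case (i)(b) with the auxiliary vertices $b_i$ is the \emph{direct} analogue of the $D$-set indiscernible of Figure~\ref{fig:Dind} (the genuinely new possibility in the betweenness setting is (i)(a), where the $a_i$ themselves lie on a line), and quaternary relations $N$ and $S$ \emph{are} available in $\mathcal{L}_B^1$, though working with $L$ alone, as you do, suffices.
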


\begin{proof} This is very similar to the proof of Lemma~\ref{ind-D}, and we omit the details.
\end{proof}

\begin{lem} \label{NIPtheory}
The structures $\mathcal{M}_B$ and $\mathcal{M}_D$ have NIP theory.    
\end{lem}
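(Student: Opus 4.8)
The plan is to combine the standard reduction of NIP to single object variables with the indiscernible-sequence criterion, and then to feed in the classifications of Lemmas~\ref{ind-D} and~\ref{ind-B}. Since $\mathcal{M}_B$, $\mathcal{M}_D$ and their language-expansions share an automorphism group and are $\omega$-categorical, they are interdefinable, so NIP for one transfers to all; I would work throughout in the homogeneous languages $\mathcal{L}_B^2$ and $\mathcal{L}_D^2$ (Lemmas~\ref{MDhom} and~\ref{MBhom}), where quantifier elimination holds and the type of a finite tuple over $\emptyset$ is determined by the isomorphism type of the finite substructure it induces.

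I would then recall two facts from the general theory (see \cite{simon2015guide}): (a) a theory is NIP provided every formula $\phi(x;y)$ with $x$ a single variable is NIP, which lets us assume the object variable is a singleton; and (b) such a $\phi(x;y)$ is NIP if and only if, for every indiscernible sequence $(a_i:i\in\omega)$ of elements of the $x$-sort and every parameter tuple $b$, the truth value of $\phi(a_i;b)$ is eventually constant (equivalently, $(\phi(a_i;b))_{i\in\omega}$ has only finitely many alternations). Combining (a) and (b), it suffices to prove one uniform statement: for every indiscernible sequence of singletons $(a_i:i\in\omega)$ and every finite tuple $b$ from $\mathcal{M}\in\{\mathcal{M}_B^2,\mathcal{M}_D^2\}$, the isomorphism type of the finite substructure induced on $\{a_i\}\cup b$ stabilises as $i\to\infty$. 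This at once yields eventual constancy of $\phi(a_i;b)$ for every $\phi(x;y)$ with $|x|=1$, and hence NIP.

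To prove this uniform statement I would run through the cases of Lemmas~\ref{ind-D} and~\ref{ind-B}. The common mechanism is that in each case $(a_i)$ converges to a unique limit object: a direction (end) of a fixed $D$-set $(Z_j,D_j)$ or $B$-set $(Z_j,B_j)$ in the $D$-set/$B$-set indiscernible cases, and in the upwards, downwards and mixed cases a monotone sequence $(n_i)$ in the semilinear order $J$ together with an associated end in the corresponding quotient. Since $b$ is finite, each of its coordinates meets the branch of the tree traced by the $(a_i)$, and diverges from the limiting direction, at a \emph{bounded} ramification node; likewise the finitely many levels of $J$ at which the relations among $b$ are witnessed are bounded. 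Hence there is $N$ such that for all $i>N$ the position of $a_i$ relative to every coordinate of $b$---which branch at which ramification node, which $\sim$-class, and the comparison in $J$ of the witnessing levels---no longer depends on $i$. These are exactly the data recorded by the relations $L,S,P,Q^{\leq},Q^{\geq},T$ of $\mathcal{L}_D^2$ (respectively $L,N,S$ and the level-comparison symbols of $\mathcal{L}_B^2$), so every relation holding of a tuple built from $a_i$ and $b$ is constant for $i>N$, and the induced substructure on $\{a_i\}\cup b$ is constant.

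The conceptual content thus lies entirely in the classifications already established, and the remaining work is bookkeeping. The main obstacle is precisely this case analysis: for each type of indiscernible sequence and each qualitative position of a coordinate of $b$ (whether it lies in $X_j$ or not, whether it lies in the limiting branch or to the side, and whether its witnessing levels lie above or below the $n_i$), one must verify that the qualitative relationship with $a_i$ is independent of $i$ for large $i$, including for the higher-arity relations that mix $a_i$ with several coordinates of $b$. As with Lemmas~\ref{ind-D} and~\ref{ind-B}, this is elementary but lengthy, so I would present only the representative cases (the $D$-set and $B$-set indiscernibles together with one mixed case), the remainder being entirely analogous.
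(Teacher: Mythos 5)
Your proposal is correct and follows essentially the same route as the paper: work in the homogeneous languages $\mathcal{L}_B^2$, $\mathcal{L}_D^2$ where quantifier elimination holds, reduce to formulas with a singleton object variable, apply the eventual-constancy/alternation criterion for NIP along indiscernible sequences, and then verify the criterion by the case analysis of Lemmas~\ref{ind-D} and \ref{ind-B}. The only (immaterial) difference is packaging: the paper reduces via Boolean combinations to checking each atomic formula separately, whereas you check all atomic relations at once by showing the induced substructure type of $\{a_i\}\cup b$ stabilises; both leave the same case-by-case inspection as omitted bookkeeping.
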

\begin{proof}
   Again, we focus on $\mathcal{M}_D$, just mentioning how to adjust the argument for $\mathcal{M}_B$. Since we use quantifier-elimination, we work with $\mathcal{M}_D^2$, i.e. in the language with quantifier-elimination. Let $(a_i:i\in \omega)$ be a sequence of indiscernibles of singletons. Given that any finite Boolean combination of NIP formulas is NIP, it suffices to show that the atomic formulas  of $\mathcal{L}_D^2$ are NIP. For example, we must show that a formula of the form
$L(x;b_1,b_2)$ cannot define the set
$\{a_{2i}:i\in \omega\}$, with a similar statement for the formula $L(b_1;x,b_2)$, and for the other atomic formulas of $\mathcal{L}_D^2$. All such statements are clear by inspection of the three types of indiscernible sequences. We omit the details.

In the case of $\mathcal{M}_B^2$, the statement  of the Claim is unchanged, and the rest of the argument is essentially as for $\mathcal{M}_D^2$.
\end{proof}

\begin{lem}\label{trivialacl}
    The structures $\mathcal{M}_D$ and $\mathcal{M}_B$ have trivial algebraic closure, that is, for any subset $A$ of the domain we have $\acl(A)=A$.
\end{lem}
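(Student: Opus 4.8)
The plan is to combine the Ryll--Nardzewski theorem with the Jordan sets produced in Propositions~\ref{groupM_B} and~\ref{groupM_D}. Since $\acl$ has finite character it suffices to treat finite $A$, and since $\mathcal{M}$ is $\omega$-categorical, trivial algebraic closure amounts to showing that for every finite $A\subseteq M$ and every $b\in M\setminus A$ the orbit $b^{G_{(A)}}$ is infinite, where $G=\Aut(\mathcal{M})$ and $G_{(A)}$ denotes the pointwise stabiliser of $A$. The engine of the argument is the following observation: if $\Gamma\subseteq M$ is an infinite Jordan set with $b\in\Gamma$ and $A\cap\Gamma=\emptyset$, then $A\subseteq M\setminus\Gamma$, so $G_{(M\setminus\Gamma)}\le G_{(A)}$; as $\Gamma$ is a Jordan set, $G_{(M\setminus\Gamma)}$ is transitive on the infinite set $\Gamma$, whence $b^{G_{(A)}}\supseteq\Gamma$ is infinite. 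Thus it is enough, for each finite $A$ and each $b\notin A$, to exhibit an infinite Jordan set containing $b$ and disjoint from $A$.

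I would take $\Gamma$ to be a pre-direction of $\mathcal{M}_D$ (respectively a pre-node of $\mathcal{M}_B$), which is an infinite Jordan set by Proposition~\ref{groupM_D}(4) (respectively~\ref{groupM_B}(4)). Recall the $G$-invariant family $(X_j:j\in J)$ with $\bigcup_j X_j=M$ and $\bigcap_j X_j=\emptyset$, the $j$-definable equivalence relation $\sim_j$ on $X_j$, and write $[c]_j=\{x:x\sim_j c\}$ for the $\sim_j$-class of $c\in X_j$. For $c\in M$ put $\beta_c=\{j\in J:c\in X_j\}$. Using that $X_j\cap X_{j'}=\emptyset$ whenever $j,j'$ are incomparable in the meet-tree $J$, each $\beta_c$ is a branch of $J$ (a downward-closed chain); and for distinct $a,b$ the branches $\beta_a,\beta_b$ share a proper initial segment and then diverge, so there is a node above which $a\notin X_j$ while $b\in X_j$.

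Given finite $A$ and $b\notin A$, I would then choose $j^\dagger\in\beta_b$ lying above the finitely many nodes at which the points of $A$ diverge from $b$. At level $j^\dagger$ each $a\in A$ has either already left $X_{j^\dagger}$, or lies in $X_{j^\dagger}$ but in a $\sim_{j^\dagger}$-class distinct from that of $b$ (the latter occurring only for an $a$ diverging exactly at $j^\dagger$); in either case $a\notin[b]_{j^\dagger}$. Hence $\Gamma=[b]_{j^\dagger}$ is an infinite Jordan set containing $b$ and disjoint from $A$, and the engine above finishes the proof. The same argument applies verbatim to $\mathcal{M}_B$, with pre-nodes in place of pre-directions.

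The step I expect to be the main obstacle is the monotonicity underlying the choice of $j^\dagger$: that the separation of $a$ from $b$ persists as one moves \emph{up} $\beta_b$ past the divergence node. This relies on the two features that $X_{j'}\subseteq X_j$ for $j<j'$ and that above the divergence node $a$ has left $X_{j'}$, so that the coarsening of $\sim_j$ as $j$ increases cannot re-merge $a$ with $b$. These facts, together with the disjointness of $X_j,X_{j'}$ for incomparable $j,j'$ (which makes each $\beta_c$ a genuine branch), are the structural ingredients that must be extracted carefully from the description of the constructions in Section~2 and from \cite{bhattmacph2006jordan, almazaydeh2021jordan}; once they are in place, the remainder of the argument is routine.
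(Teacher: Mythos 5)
Your reduction to finite $A$, the Ryll--Nardzewski orbit criterion, and your ``engine'' (an infinite Jordan set $\Gamma$ with $b\in\Gamma$ and $A\cap\Gamma=\emptyset$ forces $b^{G_{(A)}}\supseteq\Gamma$ to be infinite) are all correct, and the paper's own proof runs on exactly this engine with $\Gamma$ a pre-direction (resp.\ pre-node). The gap is in how you locate $\Gamma$. Your claim that for distinct $a,b$ the branches $\beta_a,\beta_b$ ``share a proper initial segment and then diverge'' is false. The relations $\sim_j$ \emph{coarsen} as $j$ increases: $\sim_i|_{X_j}\subseteq\ \sim_j$ for $i<j$ (condition (v) of Definition~\ref{limits}; in the construction a $\sim_j$-class is a union of pre-branches, hence of $\sim_i$-classes, of $Z_i$), and moreover for $k>j$ the set $X_k$ is itself a union of $\sim_j$-classes, being a union of pre-branches of $Z_j$. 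Consequently, if $a\sim_j b$ for some $j$ --- and such pairs certainly exist, since the classes (pre-nodes/pre-directions) are infinite --- then $a$ and $b$ enter and leave every higher level together, so $\beta_a=\beta_b$ and there is no divergence node at all; worse, $a\in[b]_{j^\dagger}$ for \emph{every} $j^\dagger\geq j$ with $j^\dagger\in\beta_b$. So when $A$ contains an element eventually $\sim$-equivalent to $b$, no choice of $j^\dagger$ ``high up'' makes $[b]_{j^\dagger}$ disjoint from $A$: moving up the tree is exactly the wrong direction for such elements, and your case analysis (``already left $X_{j^\dagger}$, or in a distinct class'') omits precisely this case. (Your auxiliary claim that $X_j\cap X_{j'}=\emptyset$ for incomparable $j,j'$ is also nowhere justified by the paper's description, but that is secondary.)

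The repair is to move \emph{down} rather than up, which is what the paper does. Since $\sim_i|_{X_j}\subseteq\ \sim_j$ for $i<j$, separation persists downwards; any two distinct points of $M$ lie in a common $X_j$ in distinct $\sim_j$-classes (the tree analogue of condition (vi) of Definition~\ref{limits}, which holds in the constructions); and $J$ is a meet-tree, so the finitely many levels separating the pairs in $A\cup\{b\}$ have a common lower bound. Hence there is $j$ (the paper takes the largest such) with all of $A\cup\{b\}$ lying in $X_j$ and pairwise $\sim_j$-inequivalent, and then $\Gamma=[b]_j$ is an infinite Jordan set containing $b$ and disjoint from $A$; your engine finishes the proof from there, exactly as in the paper, where the elements of $[b]_j$ are the promised infinitely many realisations of $\tp(b/A)$.
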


\begin{proof}
We work  in $\mathcal{M}_D$ but the argument is the same in  $\mathcal{M}_B$. We may suppose that $A$ is finite. Suppose $a\in M_D\setminus A$ and let $A'=A\cup\{a\}$. There is a largest $j\in J$ such that the elements $x/\sim_j$ for $x\in A'$ are all distinct. For any $k$, we  choose $a_1=a,\ldots,a_k \in a/\sim_j$ all realising $\tp(a/A)$. It follows that $a\not\in \acl(A)$.
    
\end{proof}

{\em Proof of Proposition~\ref{homogenizable}.} This follows immediately from Example~\ref{nonhomo ex} and Lemmas~\ref{MDhom}, \ref{MBhom}, \ref{NIPtheory} and \ref{trivialacl}. \qed

\medskip

We turn next to the proof of Theorem~\ref{growth}. We consider first a structure $M_{C,\lev}$ which arises under the name $\partial T(\prec)$ in \cite[Section 6]{cameron1987some}. Let $X$ be a countably infinite set with a distinguished element $e$, and $L_{C,\lev}$ be a language with a ternary relation $C$ and arity 4 relation $V(x,y;z,w)$. The domain of $M_{C,\lev}$
is the set of sequences $a=(a(i))_{i\in \mathbb{Q}}$ where  the $a(i)$ lie in $X$, and $a$ has finite support in the sense that all but finitely many $a(i)$ are equal to $e$. We put $C(a;b,c)$ if either $b=c$ and $a\neq b$, or $a,b,c$ are distinct and $\Min\{i: a(i)\neq b(i)\}<\Min\{i:b(i)\neq c(i)\}$.  Then $C$ is a $C$-relation  on $M_{C,\lev}$, and is in fact the universal homogeneous $C$-relation, as described in Section 4 of \cite{cameron1987some} where it is denoted by $\partial T$. If $a,b,c,d\in M_{C,\lev}$, we put 
$M_{C,\lev}\models V(a,b;c,d)$ if $a\neq b$, $c\neq d$, and $\Min\{i:a(i)\neq b(i)\}\leq \Min\{i: c(i)\neq d(i)\}$. This structure $M_{C,\lev}$ admits an iterated wreath product (in the sense of \cite[Section 6]{cameron1987some})
as an oligomorphic group of automorphisms which is a Jordan group.

Below, we sketch a proof of homogeneity of $M_{C,\lev}$. This appears not to be in the literature, but was shown for a richer structure, in which there is a total order on the domain compatible with $C$, in  \cite[Theorem 6.1, Theorem 6.2]{abyz}. The latter theorem shows that (in the setting of \cite{abyz}) one can put extra structure on the set of levels, and iterate the process. For convenience, for $a,b\in M_{C,\lev}$ we write $a\wedge b=q$ if $a\neq b$ and $q=\Min\{i: a(i)\neq b(i)\}$.

\begin{lem} The following hold for the structure $M_{C,\lev}$.
\begin{enumerate}
    \item 
The structure $M_{C,\lev}$ is homogeneous and so has quantifier elimination. 
\item If $(K,v)$ is any countable algebraically closed non-trivially-valued field, then there is a copy of $M_{C,\lev}$ with domain $K$ definable without parameters in  the valued field $(K,v)$.
\item $M_{C,\lev}$ is dp-minimal and so NIP. 
\end{enumerate}
\end{lem}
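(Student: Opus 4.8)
The plan is to handle the three parts in order, with parts (2) and (3) building directly on part (1).

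For part (1) I would prove homogeneity by verifying the one-point extension property, which for a countable structure is equivalent to homogeneity (iterate a back-and-forth). Concretely, given finite substructures $A,B\subseteq M_{C,\lev}$, an isomorphism $f:A\to B$, and $p\in M_{C,\lev}\setminus A$, I must produce $q$ with $f\cup\{(p,q)\}$ an isomorphism of $A\cup\{p\}$ onto $B\cup\{q\}$. The first step is to read off the quantifier-free type of $p$ over $A$ as tree-with-levels data: $C$ makes $A$ into the leaf set of a finite tree whose internal nodes are the meets $a\wedge a'$, and $p$ is determined over $A$ by (i) the node or edge at which $p$ branches off, equivalently the data recording $p\wedge a$ and its comparison with the $a\wedge a'$ for each $a\in A$, and (ii) the position, in the linear preorder on branch-point levels recorded by $V$, of the new branch point $\Min\{p\wedge a: a\in A\}$ relative to all the levels $a\wedge a'$. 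Since $f$ preserves $C$ and $V$, it transports this specification to $B$, and it remains to realize it.

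Realizability is where the two genericity features of $M_{C,\lev}$ enter. To branch $q$ off the $B$-tree in the prescribed direction at a prescribed level $\ell\in\mathbb{Q}$, I let $q$ agree with the relevant element of $B$ below $\ell$ and take at coordinate $\ell$ a value in $X$ distinct from the finitely many values used there by elements of $B$; this is possible since $X$ is infinite. To place $\ell$ correctly I need a rational strictly below all, strictly above all, strictly between two consecutive, or equal to a prescribed one among the finitely many levels $a\wedge a'$, which is possible because $(\mathbb{Q},<)$ is a dense linear order without endpoints. The main obstacle here is bookkeeping: one must check that the branch-node-plus-level data is a complete invariant of the quantifier-free type (handling the cases where $p$ attaches at an existing node, along an existing edge, below the root, or in a new direction at an existing level) and that the chosen level is simultaneously consistent with the strict comparisons forced by $C$ and the full preorder recorded by $V$. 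Once the extension property holds, homogeneity follows, and quantifier elimination is automatic for a homogeneous relational structure.

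For part (2), on a countable algebraically closed non-trivially valued field $(K,v)$ I would define $C$ and $V$ from the valuation: $C(a;b,c)$ iff $b=c\neq a$, or $a,b,c$ are distinct with $v(a-b)<v(b-c)$; and $V(a,b;c,d)$ iff $a\neq b$, $c\neq d$ and $v(a-b)\leq v(c-d)$. These are $\emptyset$-definable in $(K,v)$; the ultrametric inequality guarantees that $C$ is a $C$-relation and $V$ a compatible level preorder (so the age of $(K;C,V)$ lies in that of $M_{C,\lev}$), with $a\wedge b$ corresponding to $v(a-b)$. By the uniqueness of the countable homogeneous structure established in part (1), to identify $(K;C,V)$ with $M_{C,\lev}$ it suffices to verify the same one-point extension property in $(K;C,V)$. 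The two genericity ingredients are again present: the residue field is infinite (indeed algebraically closed), supplying infinitely many branching directions at each ball and playing the role of $X$; and the value group $\Gamma$, being a non-trivial divisible ordered abelian group, is a dense linear order without endpoints and plays the role of $\mathbb{Q}$. Running the argument of part (1) verbatim shows $(K;C,V)$ is homogeneous with the same age, hence isomorphic to $M_{C,\lev}$.

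For part (3), I would deduce dp-minimality from part (2) by a reduct argument. The structure $(K;C,V)$ is a $\emptyset$-definable reduct of the valued field $(K,v)$, so $\mathrm{Th}(M_{C,\lev})=\mathrm{Th}(K;C,V)$ is a reduct of $\mathrm{Th}(K,v)=\mathrm{ACVF}$; since dp-rank is monotone under reducts (any indiscernible sequence, and any ICT-pattern, for the reduct is one for the richer structure), the dp-rank of $M_{C,\lev}$ is bounded by that of $\mathrm{ACVF}$. As $\mathrm{ACVF}$ is dp-minimal, $M_{C,\lev}$ is dp-minimal, hence NIP. The only points needing care are citing dp-minimality of $\mathrm{ACVF}$ and noting that passage to a $\emptyset$-definable reduct cannot increase dp-rank; alternatively one can argue directly, analyzing indiscernible sequences of singletons in $M_{C,\lev}$ as in Lemma~\ref{ind-D} and checking that, for any pair of mutually indiscernible sequences, at most one can fail to remain indiscernible over a given singleton.
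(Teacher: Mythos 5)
Your proposal is correct and follows essentially the same route as the paper: part (1) is the same one-point-extension back-and-forth (your case analysis matches the paper's three cases of attaching at an existing node, at an existing level, or at a new level), part (2) uses the same parameter-free definition of $C$ and $V$ from the valuation, and part (3) is the same reduct-of-ACVF argument citing dp-minimality of ACVF. The only difference is that you spell out the verification that $(K;C,V)\cong M_{C,\lev}$ (via Fra\"iss\'e uniqueness and the extension property, using the infinite residue field and dense value group), which the paper leaves implicit.
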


\begin{proof}
\begin{enumerate}
\item Let $\bar{a}=(a_1,\ldots,a_n)$, $\bar{b}=(b_1,\ldots,b_n)$ be tuples from $M_{C,\lev}$ such that the map $f$ with $f(a_i)=b_i$ for each $i$ is an isomorphism, and let $a\in M_{C,\lev}\setminus \{a_1,\ldots,a_n\}$. By back-and-forth, it suffices to find $b\in M_{C,\lev}$ such that we may extend $f$ to a partial isomorphism with domain $\{a_1,\ldots,a_n,a\}$ by putting $f(a)=b$.  Choose $j_0\in \{1,\ldots,n\}$ so as to maximise $a\wedge a_{j_0}$.

{\em Case 1.} There is $k_0\in \{1,\ldots,n\}\setminus\{j_0\}$ with $a\wedge a_{j_0}=a_{j_0}\wedge a_{k_0}$.  Now let
$$I:=\{i\in \{1,\ldots,n\}: a\wedge a_{j_0}=a_{j_0}\wedge a_i\}.$$
Put $q:= b_{j_0}\wedge b_{k_0}$. Choose $b\in M_{C,\lev}$ so that $b(r)=b_{j_0}(r)$ for $r<q$, and $b(q)\neq b_i(q)$ for each $i\in I$. 

{\em Case 2.} Not Case 1, but there are  distinct $k_0,l_0\in \{1,\ldots,n\}$ with $a\wedge a_{j_0}=a_{k_0}\wedge a_{l_0}$. Now let $q=b_{j_0}\wedge b_{l_0}$. Choose $b$ so that $b(r)=b_{j_0}(r)$ for $r<q$, and $b(q)\neq b_{j_0}(q)$.

{\em Case 3.} Not Cases 1 or 2. Now choose $q\in \mathbb{Q}$ so that $q<b_k\wedge b_l$ whenever $a\wedge a_{j_0}< a_k\wedge a_l$, and $q>b_k\wedge b_l$ whenever $a\wedge a_{j_0}> a_k\wedge a_l$. As in Case 2, choose $b$ so that $b(r)=b_{j_0}(r)$ for $r<q$, and $b(q)\neq b_{j_0}(q)$.

\item Define the relation $C$ on $K$ by putting $C(x;y,z)$ if and only if $v(x-y)\leq v(y-z)$, and $V(x,y;z,w)$ if and only if $x\neq y$ and $z\neq w$ and $v(x-y)<v(z-w)$. 

\item This follows from (ii); the dp-minimality of any model of the theory ACVF of algebraically closed valued fields is noted in \cite[Theorem A.11]{simon2015guide}.
\end{enumerate}
\end{proof}

\begin{figure}
	\begin{center}
			\includegraphics[scale=.5]{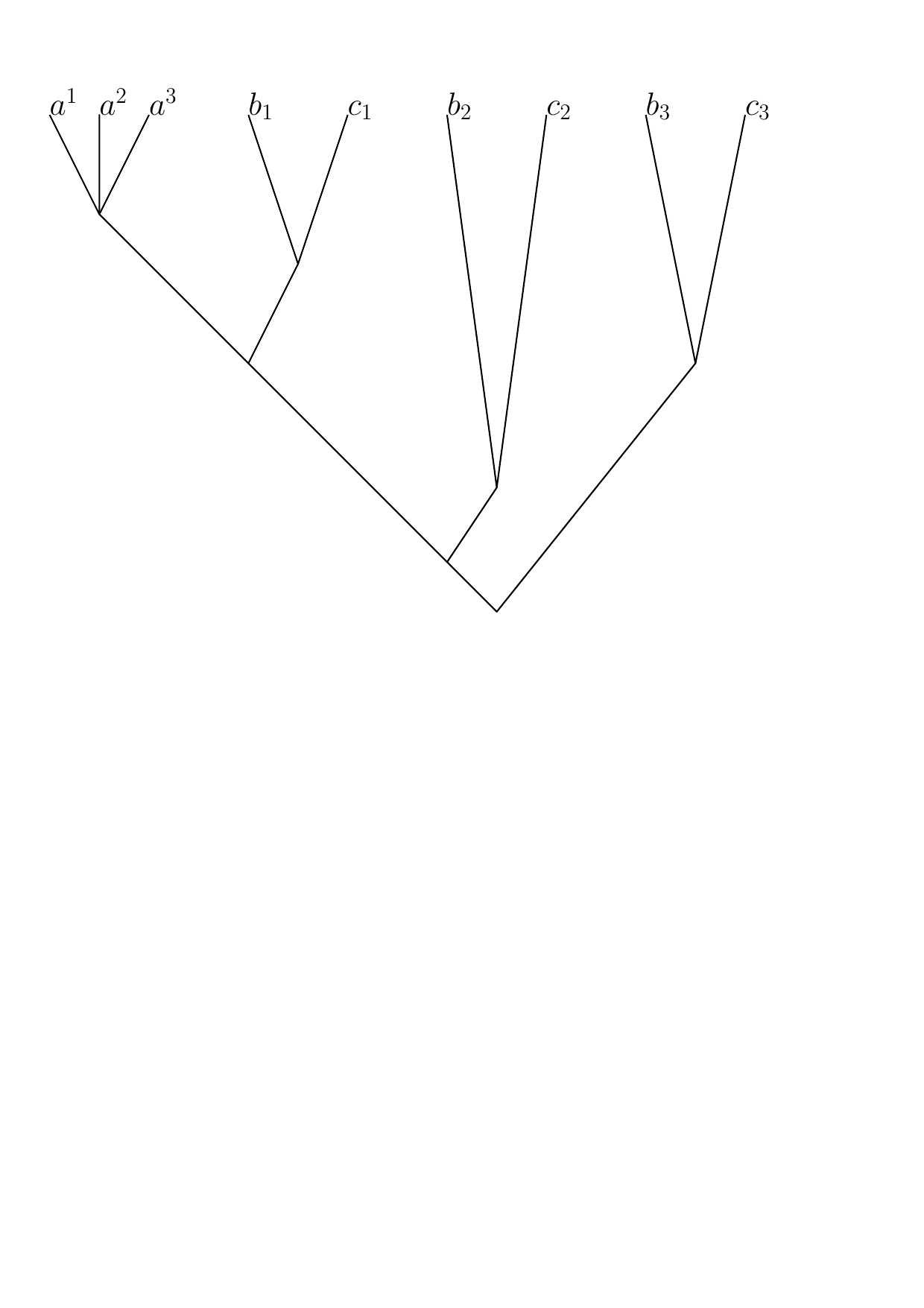}
		\end{center} 
  \caption{The structure $M_\sigma$, where $\sigma$ is the permutation of $\{1,2,3\}$ with $1 <_1 2 <_1 3$ and $2 <_2 3 <_2 1$. The order in which $\{b_i, c_i\}$ meets the leftmost branch defines $<_1$, while the height of $b_i \wedge c_i$ defines $<_2$.}
  \label{fig:Msigma}
\end{figure}

\begin{prop} \label{levelC} The structure $M_{C,\lev}$ satisfies the conclusions of Theorem~\ref{growth}, that is, 
\begin{enumerate}
   \item $(f_k(\Aut(M_{C,\lev}))\geq \lfloor k/3 \rfloor !$ for sufficiently large $k$, so in particular $(f_k(\Aut(M_{C,\lev}))$ has super-exponential growth rate.
    \item Age$(M_{C,\lev})$ is not wqo.
     \item $M_{C,\lev}$ is not monadically NIP.
\end{enumerate}
\end{prop}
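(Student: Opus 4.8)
The plan is to derive all three conclusions from a single feature of $M_{C,\lev}$: it carries two essentially independent definable orders. There is a \emph{horizontal} order coming from the $C$-relation (the order in which branches meet a fixed spine) and a \emph{vertical} order coming from the levels via $V$ (the height $a\wedge b$ of a ramification point). Because these two orders can be prescribed independently — exactly the content of Figure~\ref{fig:Msigma} — one can encode an arbitrary permutation, an arbitrary pattern-antichain, and an arbitrary grid, which gives (1), (2), and (3) respectively.

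For (1), for each $\sigma\in S_n$ I would build a finite substructure $M_\sigma\subseteq M_{C,\lev}$ on $3n$ elements, using three elements per coordinate as in Figure~\ref{fig:Msigma}: an anchor $a_i$ on the leftmost branch together with the pair $\{b_i,c_i\}$, arranged so that the $C$-structure records the horizontal order $<_1$ of the anchors along the spine, the relation $V$ records the vertical order $<_2$ of the meets $b_i\wedge c_i$, and the permutation relating $<_1$ to $<_2$ is exactly $\sigma$. Each such finite $L_{C,\lev}$-structure lies in the age, so is realised. Any isomorphism preserves $C$ and $V$, hence both orders, hence $\sigma$ (the harmless swaps $b_i\leftrightarrow c_i$ fix $a_i$ and the height of $b_i\wedge c_i$, so leave $\sigma$ unchanged); thus $M_\sigma\cong M_\tau$ forces $\sigma=\tau$. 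This yields at least $n!$ isomorphism types of $3n$-element substructures, so by homogeneity $f_{3n}(\Aut(M_{C,\lev}))\geq n!$, and using that $(f_k)$ is non-decreasing for an infinite permutation group one gets $f_k\geq \lfloor k/3\rfloor!$ for all large $k$, which is super-exponential.

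For (2), I would observe that embeddability of these gadgets corresponds to pattern containment: an embedding $M_\sigma\hookrightarrow M_\tau$ respects the gadget decomposition (which is recoverable from the $L_{C,\lev}$-relations) and so induces a realisation of $\sigma$ as a sub-permutation of $\tau$. Since the finite permutations are known not to be well-quasi-ordered under pattern containment, fixing an infinite pattern-antichain $(\sigma_m)_{m\in\omega}$ and passing to $(M_{\sigma_m})$ produces pairwise non-embeddable structures, i.e.\ an infinite antichain in $\mathrm{Age}(M_{C,\lev})$; hence the age is not wqo.

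For (3), I would exhibit definable grids of unbounded size. Fix a spine and $n$ disjoint subtrees (the \emph{rows}) hanging off it, and inside each row place ramification points at a common increasing sequence of heights $h_1<\cdots<h_n$ (shared across rows, hence comparable by $V$); this gives for each pair $(i,j)$ a \emph{cell} $g_{ij}$ whose row is detected by $C$ and whose column (height $h_j$) is detected by $V$. The key point is that $g_{ij}$ is \emph{uniformly} definable by a single formula $\psi(x;\bar r,\bar c)$ from parameters $\bar r$ naming row $i$ and $\bar c$ naming column $j$. Expanding $M_{C,\lev}$ by one unary predicate $P$ that names an arbitrary set $S$ of cells and setting $\phi(\bar r;\bar c):=\exists x\,(P(x)\wedge \psi(x;\bar r,\bar c))$ gives $(M_{C,\lev},P)\models\phi(\bar r_i;\bar c_j)\iff g_{ij}\in S$. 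As $S$ ranges over all subsets and $n$ is unbounded, $\phi$ shatters, so $(M_{C,\lev},P)$ has IP and $M_{C,\lev}$ is not monadically NIP. I expect part (3) to be the main obstacle: one must verify both that grids of every size genuinely occur as substructures and, crucially, that the cell is picked out by a \emph{single} formula from the row and column parameters (so that one formula shatters in the monadic expansion); the remaining work in (1) and (2) is the more routine bookkeeping of matching isomorphism and embeddability of the gadgets with equality and pattern-containment of permutations, together with the cited monotonicity of $(f_k)$ and non-wqo of permutation patterns.
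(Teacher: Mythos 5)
Your strategy for (1) and (2) is essentially the paper's: encode a permutation $(\{1,\ldots,n\},<_1,<_2)$ into a finite substructure, using $C$ for one order and the level-comparison $V$ for the other, then invoke the known infinite antichain of permutations under pattern containment. But there is a genuine gap in (2), located exactly at the step you call routine bookkeeping: you must show that an embedding $M_\sigma\hookrightarrow M_\tau$ respects the gadget decomposition, and this is not automatic, because embeddings preserve only quantifier-free and existential formulas, not the global shape of a finite structure. With your design (one anchor per coordinate sitting on the spine) the claim is actually doubtful: the triple consisting of an anchor $a_1$ and its pair $b_1,c_1$ has the same quantifier-free type as the triple consisting of $b_1$ together with the pair $b_2,c_2$ inside a two-coordinate gadget (in both, the first element meets the other two at a common level strictly below the level at which they meet each other, and no other relations hold), so the map $a_1\mapsto b_1$, $b_1\mapsto b_2$, $c_1\mapsto c_2$ is an embedding carrying an anchor to a pair-element; nothing in your argument rules out such scrambled embeddings between large gadgets. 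The paper's gadget is designed precisely to kill this: it uses three anchors $a^1,a^2,a^3$, shared by all coordinates, with $\neg C(a^i;a^j,a^k)$, so that the anchor set is defined by the \emph{existential} formula $\exists y,z\,(x\wedge y=x\wedge z=y\wedge z)$, satisfied by no other element of any gadget. Since embeddings preserve existential formulas, anchors go to anchors, and then the equivalence relation $E$ and both orders are quantifier-free definable from an anchor, hence preserved. You should either prove the preservation claim for your gadget or simply adopt the paper's. (Your part (1) is fine: isomorphisms preserve the whole structure, and your count together with the monotonicity of $(f_k)$ gives the stated bound.)

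For (3) you take a genuinely different route from the paper, and it can be made to work, but as written it has a flaw: to refute monadic NIP you need a \emph{single} expansion $(M_{C,\lev},P)$ in which one formula has IP, whereas you let $S$ ``range over all subsets'', which changes the structure $(M_{C,\lev},P)$ as $S$ varies. The standard repair is to build one infinite grid --- rows given by disjoint cones off a spine, columns given by levels named by pairs $(p_j,q_j)$, with cell $g_{ij}$ the chosen point of row $i$ satisfying $\lev(g_{ij},r_i)=\lev(p_j,q_j)$, which is indeed cut out by a single quantifier-free formula in $C$ and $V$ --- and to fix one set $S$ whose columns enumerate all finite subsets of the rows; then $\phi(\bar r;\bar c)=\exists x\,(P(x)\wedge\psi(x;\bar r,\bar c))$ shatters arbitrarily large finite sets of rows in the single structure $(M_{C,\lev},P)$, giving IP. The paper instead applies the criterion that monadically NIP theories have endless indiscernible triviality (citing \cite[Theorem 1.2]{braunfeld2024corrigenda}): taking $\sigma=(\mathbb{Z},<_1,<_2)$ with both orders standard, the sequence $I=(b_ic_i:i\in\mathbb{Z})$ is indiscernible over the anchor set $A$, and a further pair $b,c$ whose $E$-class is $<_1$-before all of $I$ but makes a nontrivial $<_2$-cut leaves $I$ indiscernible over $Ab$ and over $Ac$ but not over $Abc$. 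Your grid-coding is more hands-on and self-contained; the paper's argument is shorter given the cited theorem, and reuses the same gadget as in (1) and (2). Both are legitimate once your quantification over expansions is repaired.
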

\begin{proof}

The basic idea is to encode permutations (viewed as sets equipped with two linear orders $<_1, <_2$) of $\{1,\ldots,n\}$ into sets of size $2n+3$.  Suppose $n\geq 2$ and let $\sigma\in \Sym_n$. Pick distinct
$a^1, a^2, a^3,b_1,\ldots,b_n, c_1,\ldots,c_n\in M_{C,\lev}$ such that 
\begin{enumerate}
    \item $\neg C(a^i;a^j, a^k)$ for distinct $i,j,k \in \{1,2,3\}$
    \item $C(a^j;b_i,c_i)$ for every $j$ and $1\leq i\leq n$
    \item $a^k\wedge b_i<a^k\wedge b_j$ for every $k$ whenever $i<_1 j$
    \item $b_i \wedge c_i < b_j \wedge c_j$ whenever $i <_2 j$
\end{enumerate}
An example of the resulting structure $M_\sigma$ is depicted in  Figure \ref{fig:Msigma}. There is an equivalence relation $E$ on $\{b_i, c_i | i \in [n]\}$ whose classes are $\{\{b_i, c_i\} | i \in [n]\}$, which is definable from any $a_i$ by $E(x,y)$ if $x \wedge a_i = y \wedge a_i$. Given $E$-classes $E_j, E_k$ we may define $E_j <_1 E_k$ if $a_i \wedge E_j > a_i \wedge E_k$, where $a_i \wedge E_j$ is the element given by the meet of any $a_i$ with either element of $E_j$. We may define a second order $E_j <_2 E_k$ by $b_j \wedge c_j < b_k \wedge c_k$.

We claim that if $\sigma\not\hookrightarrow \tau$ then $M_\sigma \not\hookrightarrow M_\tau$. To see this, recall that embeddings must preserve the fact that an existential formula holds of a tuple. Now suppose  $f \colon M_\sigma \hookrightarrow M_\tau$ is an embedding. First observe that for any permutation $\pi$, set $\{a_1,a_2,a_3\}\in M_\pi$ is existentially definable by the formula $\phi(x)$ saying there exist $y,z$ such that $x \wedge y = x \wedge z = y \wedge z$. Note that fixing $a_i$, the definition of $E$ given above is quantifier-free, and so both $E$ and $\neg E$ are existentially definable. Similarly for the definitions of $<_1$ and $<_2$. Thus $f$ sends $\{a_1,a_2, a_3\} \subset M_\sigma$ to the corresponding 3-subset of $M_\tau$, and preserves $E$-classes and both linear orders on them. Thus $f(M_\sigma)$ encodes the permutation $\sigma$ as a substructure of $\tau$.

In particular, if $\sigma$ and $\tau$ are finite, then $\sigma \not\cong \tau$ implies $M_\sigma \not\cong M_\tau$. Part (1) of the Proposition now follows immediately, since $f_{2k+3}(\Aut(M_{C,\lev}))\geq k!$. 

Part (2) follows immediately from the existence of an infinite antichain of finite permutations under order-preserving embeddings. For this, see e.g. \cite[Figure 17]{vatter2015permutation}.

Finally, we show (3). Recall from \cite{braunfeld2024corrigenda} that a theory $T$ has {\em endless indiscernible triviality} if for every $M\models T$, every endpointless indiscernible sequence $I$ in $M$, and every tuple $B$ of parameters, if $I$ is indiscernible over each $b\in B$ then $I$ is indiscernible over $B$. It is shown in \cite[Theorem 1.2]{braunfeld2024corrigenda} that if $T$ is monadically NIP then $T$ has endless indiscernible triviality. In $M_{C, \lev}$ consider the infinite substructure $M_\sigma$ as above where $\sigma = (\mathbb{Z}, <_1, <_2)$ where both $<_1$ and $<_2$ agree with the standard order. Using quantifier-elimination in $M_{C, \lev}$, we see the sequence $I = (b_ic_i : i \in \mathbb{Z})$ is indiscernible over $A = \{a_1, a_2, a_3\}$. Consider a further pair of elements $b,c \in M_{C, \lev}$ corresponding to an $E$-class coming $<_1$-before every $E$-class of $I$  but defining a non-trivial $<_2$-cut in $I$. Then $I$ is still indiscernible over $Ab$ and over $Ac$ but not over $Abc$.
\end{proof}

{\em Proof of Theorem~\ref{growth}.} We give the argument for $\mathcal{M}=\mathcal{M}_D^2$. The proof is essentially the same for $\mathcal{M}=\mathcal{M}_B^2$.

We claim that the structure $M_{C,\lev}$ is definable on singletons in $\mathcal{M}$. 
For this, recall the paragraph after the proof of Proposition~\ref{groupM_B}. It was noted that if $[a]$ is a prenode of $\mathcal{M}_D^2$ at level $j_0$, then there is
a $G_{a/\sim_j}$-invariant sequence of equivalence relations $E_i$ on $[a]$ indexed by $\{i:i<j_0\}$ (which is order-isomorphic to $(\mathbb{Q},<)$) and for $i<k<j_0$ we have  $E_i\subset E_k$. This determines a $C$-relation on $[a]$ with $C(x;y,z)$ holding if for some $i$, some $E_i$-class contains $y,z$ and omits $x$. By \cite[Lemma 5.4]{almazaydeh2021jordan}, if $x,y\in [a]$ are distinct, then there is a greatest $i<j_0$ such that $\neg E_i xy$, and for such $i$ we have $F_i xy$. We shall put $\lev(x,y)=i$, and define $V$ on $[a]^4$ by putting $V(x,y;z,w)$ if $x\neq y$, $z\neq w$, and $\lev(x,y)\geq \lev(z,w)$.

Easily,  $C$ and $V$ are $a/\sim_j$-definable. Also, the structure $([a], C,V)$ is isomorphic to the structure $M_{C,\lev}$ of Proposition~\ref{levelC}. We do not give full details of this, but for example, in the proof of \cite[Lemma 5.7]{almazaydeh2021jordan} a bijection is constructed from $[a]$ (there denoted $[n]$) to a set $\Omega$ which is pretty clearly identifiable with the domain of $M_{C,\lev}$, and this bijection respects $C$ and $V$.

Now let $b,c\in X_j$ with $a,b,c$ inequivalent modulo $\sim_j$, so $j$ and hence $[a]$ are $abc$-definable. Then the structure $([a], C,V)$ is $abc$-definable, and we identify it with $M_{C,\lev}$ as above. Note that by the homogeneity of $\mathcal{M}$, these definitions can be taken quantifier-free, and so both they and their negations are preserved by passing to substructures.

Part (i) of  the theorem follows immediately from these observations and Proposition~\ref{levelC}, since we may name the parameters $a,b,c$ by unary predicates. 

Parts (ii) and (iii) follow by taking substructures of $([a], C,V)$ as in Proposition~\ref{levelC}. \qed

\medskip

We remark that while the first two points of Theorem \ref{growth} depend only on the definable relations of $\mathcal{M}$, and so are not dependent on the choice of language, the proof of the last point does use that we are working in the homogeneous language, because it is concerned with substructures of $\mathcal{M}$ and because embeddings need only preserve existential formulas. Nevertheless, we expect the third point to continue to hold regardless of the choice of language, in particular for $\mathcal{M} \in \{\mathcal{M}_B^0, \mathcal{M}_D^0\}$.

\section{Proof of Theorem~\ref{max}}
We show first that if $\mathcal{M}=\mathcal{M}_B^0$ and $G=\Aut(\mathcal{M})$, then $G$ is a maximal-closed subgroup of $\Sym(M)$. We omit the proof in the case when $H=\Aut(\mathcal{M}_D)$ -- the details are very similar. 

First observe that $G$ is a 3-homogeneous 2-primitive Jordan group. It follows from the description of primitive Jordan groups in \cite{adeleke1996classification} that if $G<H<\Sym(M)$ with $H$ closed, then one of the following holds.
\begin{enumerate}
    \item[(a)] $H$ preserves a linear separation relation.
    \item[(b)] $H$ preserves a $D$-relation.
    \item[(c)] $H$ preserves a Steiner system.
    \item[(d)] $H$ preserves a limit of Steiner systems.
    \item[(e)] $H$ preserves a limit of betweenness relations or $D$-relations.
    \end{enumerate}
To prove the theorem, we must eliminate each of these cases. Cases (a), (b), (c) are eliminated respectively in Lemmas 6.4, 6.6 and 6.5 of \cite{bhattmacph2006jordan}. 

To eliminate (e), suppose that $H$ preserves a limit of betweenness relations or $D$-relations. Then (by definition) $H$ is not 3-transitive. However, $G$ is 3-homogeneous and induces $C_2$ on each $3$-set. Since $C_2$ is maximal in $\Sym_3$, there is no such $H$. 

It remains to show that the group $H$ cannot preserve a limit of Steiner systems, and for this we argue as in 
\cite[Section 3]{bodirsky-macpherson}. We suppose for a contradiction that $H$ preserves a limit of Steiner $(n-1)$-systems, and follow the notation of Definition~\ref{limits-steiner}. As in \cite{bodirsky-macpherson}, we may suppose that the $X_i$ are all infinite. Also, arguing as above, as $G$ is 3-homogeneous and induces a maximal subgroup of $\Sym_3$ on 3-sets, we may suppose that $H$ is 3-transitive, so in particular $n\geq 3$. 

We argue as in \cite{bodirsky-macpherson}, but with the notion of {\em pre-branch}  in place of cone. We work in the case of $\mathcal{M}_B^0$. The main point is that pre-branches are a special class of Jordan sets for $G=\Aut(\mathcal{M}_B^0)$ with the property that if $U$ is a pre-branch, $A\subset U$  is finite, and $a\in U\setminus A$, then there is a pre-branch $U'\subset U$ with $a\in U'$ and $A\cap U'=\emptyset$. 

We claim that no set $X_j$ can contain a pre-branch.  To see this, suppose that $U$ is a pre-branch lying in $X_j$. Pick distinct $a_1,\ldots,a_{n-1}\in X_j$ and let $l$ be the unique Steiner line (of the Steiner system on $X_j$ containing these points). Let $b_{n-1}$ be a point of $X_j$ not on $l$, and $m$ be the Steiner line of $X_j$ containing $a_1,\ldots a_{n-2}, b_{n-1}$. As Steiner lines have more than $n-1$ points, there are $a_n\in l\setminus \{1_1,\ldots,a_{n-1}\}$ and $b_n\in m\setminus \{a_1,\ldots,a_{n-2},b_{n-1}\}$. So $l$ is the unique line containing  $a_1,\ldots,a_{n-3}, a_{n-1},a_n$  and $m$ is the unique line containing $a_1,\ldots,a_{n-3}, a_{n-1},a_n$, and $m$ is the unique line containing $a_1,\ldots,a_{n-3}, b_{n-1},b_n$. Let $A=\{a_1,\ldots,a_{n-3},a_{n-1},a_n,b_{n-1},b_n\}$. Since $A$ contains $n-1$ points of each of $l$ and $m$, any automorphism of the Steiner system on $X_j$ fixing $A$ fixes $l$ and $m$ setwise, so fixes the remaining point $a=a_{n-2}$ of their intersection. In particular, we have finite $A\subset X_j$ and $a\in X_j\setminus A$ such that $G_{(A)}\leq G_a$. By the last paragraph, we may choose a pre-branch $U'\subset U$ with $a\in U'$ and $A\cap U'=\emptyset$.
Now $G_{(M\setminus U')}$ contains an element $h$ with $a^h\neq a$ and $h\in H_{((M\setminus X_j)\cup A)} \leq H_{\{X_j\}}\cap H_{(A)}$. This is impossible as $H_{\{X_j\}}$ preserves the Steiner system on $X_j$.

\medskip

{\em Claim.} Any infinite subset $V$ of $M$ meets infinitely many disjoint pre-branches. 

\medskip

{\em Proof of Claim.}
{\em Case 1.} There is $j\in J$ such that
$V/\sim_j$ is infinite. Now let $k<j$. There is $a\in Z_k$ such that $V$ meets infinitely many branches of $Z_k$ at $a$. The corresponding pre-branches a $a$ are disjoint and satisfy the Claim.

{\em Case 2.} For each $j\in J$, $V/\sim_j$ is finite. In this case, there are $j_1>j_2>\ldots$ in $J$, indexed by $\omega$, such that for each $i\in \omega$, $|V/\sim_{j_{i+1}}|>|V/\sim_{j_{i}}|$. Pick $v_1\in Z_{j_1}$ such that $[v_1]\cap V\neq \emptyset$. There is $b_2\in Z_{j_2}$ such that $[v_1]$ is a union of pre-branches at $b_2$. Choose $W_1$ to be one of these pre-branches, meeting $V$. Now as $|V/\sim_{j_{2}}|>|V/\sim_{j_{1}}|$, we may pick $v_2\in Z_{j_2}$ disjoint from the above branches at $b_2$, such that
$[v_2]\cap V \neq \emptyset$. Again, there is $b_3\in Z_{j_3}$ such that $[v_2]$ is a union of pre-branches at $b_3$. We may pick $W_2$  to be one of these pre-branches, meeting $V$. Iterating this argument, we find a sequence $(W_i:i\in \omega)$ of disjoint pre-branches all meeting $V$. 

\medskip

Finally, pick a cofinal subset $\{i_n:n\in \omega\}$ of $J$. By the last paragraph, $X_{i_0}$ meets infinitely many disjoint pre-branches $\{U_n:n\in \omega\}$, so for each $n\in \omega$
there is $x_n\in X_{i_0}\cap U_n$. By the previous paragraph there is also $y_n\in U_n\setminus X_{i_n}$, for each $n$. Using that pre-branches are Jordan sets, there is $g\in G$ with $x_n^g=y_n$ for each $n$. Thus, for each $n$ we have $X_{i_0}^g\cap (M\setminus X_{i_n})\neq \emptyset$, so for each $j\geq i_0$ and each 
$k\in J$ we have $X_j^g\cap (M\setminus X_k)\neq \emptyset$. This contradicts condition (iv) in Definition~\ref{limits-steiner}. \qed
\bibliographystyle{plain}
\bibliography{references}
\end{document}